\newtheorem{theorem}{Theorem}[section]
\newtheorem{lemma}[theorem]{Lemma}
\newtheorem{prop}[theorem]{Proposition}
\newtheorem{corollary}[theorem]{Corollary}
\theoremstyle{definition}
\newtheorem{rem}[theorem]{Remark}
\newtheorem{notation}[theorem]{Notation}
\newtheorem{example}[theorem]{Example}
\newcommand{\lcosmash}{\mathop{\raisebox{0.2ex}{\makebox[0.92em][l]{${\scriptstyle>\mathrel{\mkern-4mu}\blacktriangleleft}$}}}}
\newcommand{\rboson}{\mathop{\raisebox{0.12ex}{$\shortmid$}\hspace{-0.8mm}\raisebox{0.2ex}{\makebox[0.86em][r]{${\scriptstyle\gtrdot\mathrel{\mkern-4mu}<}$}}}}
\newcommand{\lboson}{\mathop{\raisebox{0.2ex}{\makebox[0.86em][l]{${\scriptstyle>\mathrel{\mkern-4mu}\lessdot}$}}\raisebox{0.12ex}{\hspace{-0.8mm}$\shortmid$}}}
\renewcommand{\k}{\Bbbk}
\newcommand{\ot}{\otimes}
\newcommand{\G}{\hat{G}}
\renewcommand{\L}{L^*}
\newcommand{\R}{R^*}
\newcommand{\sL}{\mathscr{L}}
\newcommand{\sI}{\mathscr{I}}
\newcommand{\sM}{\mathscr{M}}
\newcommand{\YD}{\mathscr{Y\hspace{-1mm}D}}
\renewcommand{\a}{\alpha}
\renewcommand{\b}{\beta}
\newcommand{\tl}{\triangleleft}
\newcommand{\tr}{\triangleright}
\newcommand{\Sm}{S^{-1}}
\renewcommand{\l}{\langle}
\renewcommand{\r}{\rangle}
\newcommand{\ep}{\varepsilon}
\newcommand{\hi}{h_{(1)}}
\newcommand{\hn}{h_{(2)}}
\newcommand{\hs}{h_{(3)}}
\newcommand{\ai}{\alpha_{(1)}}
\newcommand{\an}{\alpha_{(2)}}
\newcommand{\as}{\alpha_{(3)}}
\newcommand{\isomto}{\overset{\simeq}{\longrightarrow}}
\numberwithin{equation}{section}
\begin{document} 

\title[Finite quantum groups]{Simple modules over finite quantum groups and\\ their Drinfel'd doubles}

\author[A.~Masuoka]{Akira Masuoka}
\address{Akira Masuoka: 
Institute of Mathematics, 
University of Tsukuba, 
Ibaraki 305-8571, Japan}
\email{akira@math.tsukuba.ac.jp}

\author[A.~Nakazawa]{Atsuya Nakazawa}
\address{Atsuya Nakazawa: 
Nissin Computer System Co. Ltd., 
1-5-3 Koraku, Bunkyo-ku, Tokyo 112-0004, Japan}
\email{nakazawa@nisin.co.jp}


\begin{abstract}
By finite quantum groups we mean Lusztig's finite-dimensional pointed Hopf algebras
called quantum Frobenius Kernels \cite{L,Lu}, and their natural generalizations 
due to Andruskiewitsch and Schneider \cite{AS1,AS2}. For a Hopf algebra $H$ 
in a special class of the latter generalizations, which arises  
from a pair of quantum linear spaces,
Krop and Radford \cite{KR} described the simple 
modules over $H$ and over the Drinfel'd double $D(H)$, showing that 
they fall into a simple pattern of parametrization. 
We extend the description to a wider class of Hopf algebras
which includes the quantum Frobenius Kernels, renewing
the parametrization pattern so as to connect directly to
the so-called triangular decomposition. 
\end{abstract}

\maketitle

\noindent
{\sc Key Words:}
finite quantum group, Drinfel'd double, Hopf algebra, Nichols algebra, 
pre-Nichols algebra.

\medskip
\noindent
{\sc Mathematics Subject Classification (2010): 16T05, 17B37.}

\section{Introduction}\label{sec:intro}

Lusztig \cite{L,Lu} constructed the finite-dimensional pointed Hopf algebras $H$
called quantum Frobenius Kernels, which admit as their feature the triangular decomposition
\begin{equation}\label{eq:triangular_decomp1}
H= H_- \ot H_0 \ot H_+,
\end{equation}
just as the quantum universal enveloping algebras due to Drinfel'd and Jimbo do. 
Here the $0$-part $H_0$ is the group algebra $\k G$ of some finite abelian group $G$. 
As was shown by Lusztig, the simple modules over a Hopf algebra $H$ as above is 
parametrized by the dual group $\G$ of $G$. This parametrization falls into a quite
simple pattern that will be formulated by Theorem \ref{thm:pattern} below. 
We emphasize that the triangular decomposition plays a role for it. 

Andruskiewitsch and Schneider \cite{AS2} classified the finite-dimensional pointed 
Hopf algebras with abelian group of grouplikes, 
under mild restriction to dimensions. Essential for their result 
are the Hopf-algebra objects, called \emph{Nichols algebras}, in the braided tensor category 
${}^G_G\YD$ of Yetter-Drinfel'd modules over a finite abelian group $G$. In fact, their principle 
is first to classify the finite-dimensional Nichols algebras $L$, and then to classify those
Hopf algebras which turn, after $\mathrm{gr}$ applied, into the bosonizations $L \lboson \k G$ of $L$ by $\k G$. 
The finite-dimensional pointed Hopf algebras of our concern are those $H$ which are
deformed from $(L \ot R) \lboson \k G$ by two-cocycle, as in \cite{M} (see also \cite{Ma}), 
so that $H$ admits (obviously by construction) the triangular decomposition 
\begin{equation}\label{eq:triangular_decomp2}
H= L \ot \k G \ot R
\end{equation}
which generalizes \eqref{eq:triangular_decomp1}. 
Here, $L$ and $R$ are finite-dimensional
Nichols algebras (in positive characteristic, they may be \emph{pre-Nichols algebras} \cite{M}, more generally)
which are mutually symmetric in that sense that the braidings between $L \ot R$ and $R \ot L$ are mutually
inverse. The explicit construction of $H$ is given by Proposition \ref{prop:def_of_H}. 
Krop and Radford \cite{KR} described, among others,  
the simple modules over $H$ and over the Drinfel'd double $D(H)$,
showing that they fall into a parametrization pattern (see the next paragraph),  
when $L$ and $R$ are quantum linear spaces; the result for $D(H)$, in particular, was 
surprising to the authors. 
The aim of this paper is to extend these results by Krop and
Radford to our $H$; see Theorems \ref{thm:Hsimple} and \ref{thm:D(H)simple}. 

We actually modify the construction in \cite{M}, taking the
$R$ above to be a (pre-) Nichols algebra in the category $\YD^G_{G}$ of 
the opposite-sided Yetter-Drinfel'd modules; it gives rise to the opposite-sided 
bosonization $\k G \rboson R$. This modification
indeed realizes the triangular decomposition \eqref{eq:triangular_decomp2},
and makes a more conceptual treatment of $D(H)$ possible. Our parametrization pattern 
for simple modules (see Theorem \ref{thm:pattern}) renews that by Krop and Radford so as
to connect directly to the triangular decomposition.

The pointed Hopf algebras investigated by Krop and Radford, as well as Lusztig's
quantum Frobenius Kernels, are reproduced from our $H$ in Example \ref{ex:two_examples}.  
\medskip

In revision of the manuscript the authors added the articles \cite{BT}, \cite{HN} and \cite{PV}
to the References, 
among which Bellamy and Thiel \cite{BT} develops highest weight theory of $\mathbb{Z}$-graded
modules over a finite-dimensional $\mathbb{Z}$-graded algebra with triangular decomposition. What they call
\emph{triangular decomposition} (attributed to Holmes and Nakano \cite{HN}) 
essentially generalizes what we will define in Section 
\ref{subsec:pattern}, and turns
out to simplify some of our arguments. 
See Remark \ref{rem:added_in_revision} for more details which contain our contribution realized in the
end.  
\medskip

Throughout in what follows we work in the situation given below, unless otherwise stated. 

\begin{notation}\label{notation1}
Let $G$ be a finite abelian group, whose identity element will be denoted by $1$.
Let $\k$ be the field over which we work; the unadorned $\ot$ denotes the tensor product over $\k$.
We only assume that the characteristic 
$\mathrm{chrar}~\k$ of $\k$
does not divide the order $\mathrm{ord}(G)$ of $G$, and $\k$ contains a primitive $N$-th root of $1$, where
$N=\mathrm{exp}(G)$ denotes the exponent of $G$. It follows that the dual group $\G$ of $G$, which consists 
of all group homomorphisms from $G$ to the multiplicative group $\k^{\times}=\k \setminus \{0\}$ of $\k$,
is non-canonically isomorphic to $G$, and the dual Hopf algebra $(\k G)^*$ of the group algebra
$\k G$ is canonically isomorphic to $\k \G$. Note that the Hopf algebras are both semisimple. 
\end{notation}


\section{A parametrization pattern for simple modules}\label{sec:pattern}

\subsection{}\label{subsec:tensor_product}
Let $\Lambda$ be a finite-dimensional algebra. Two subalgebras $L$ and $R$ of $\Lambda$ are said to be
\emph{permutable}, if $LR=RL$. In this case $LR\, (=RL)$ is a subalgebra of $\Lambda$. More than two 
subalgebras are said to be \emph{permutable}, if any two of them are permutable.

Let $R_1,R_2,\dots, R_n$ be subalgebras of $\Lambda$. We write
\begin{equation}\label{eq:presentation}
\Lambda = R_1 \ot R_2 \ot \dots \ot R_n, 
\end{equation}
if the product map\ 
$R_1\ot R_2 \ot \dots \ot R_n \to \Lambda,\ x_1\ot x_2 \ot \dots \ot x_n \mapsto x_1x_2\dots x_n$\
is a bijection. Suppose that this is the case.
We call \eqref{eq:presentation} a \emph{tensorial presentation} of $\Lambda$.
Given numbers $1 \le i_1 < i_2 < \dots < i_r \le n$, 
we regard $R_{i_1}\ot R_{i_2} \ot \dots \ot R_{i_r}$
as a subspace of $\Lambda$, identifying it with $R_{i_1}R_{i_2}\dots R_{i_r}$ through the 
isomorphic product map $R_{i_1}\ot R_{i_2} \ot \dots \ot R_{i_r}\isomto R_{i_1}R_{i_2}\dots R_{i_r}$. 
If $R_{i_1}, R_{i_2}, \dots, R_{i_r}$ are permutable, then
$\Gamma:= R_{i_1}\ot R_{i_2} \ot \dots \ot R_{i_r}$ is a subalgebra of $\Lambda$, and
\[
\Gamma=R_{i_{\sigma(1)}}\ot R_{i_{\sigma(2)}} \ot \dots \ot R_{i_{\sigma(r)}}
\]
gives another tensorial presentation of $\Gamma$ for every permutation $\sigma \in \mathfrak{S}_r$, as
is seen by counting dimensions. 

\subsection{}\label{subsec:pattern}
Let $\Lambda$ be a finite-dimensional algebra which includes, as subalgebras, 
two algebras $L$ and $R$, as well as the group algebra $\k G$, so that
\begin{equation}\label{eq:TRD}
\Lambda = L \ot \k G \ot R. 
\end{equation}
Assume that $L$ and $\k G$, as well as $\k G$ and $R$, are permutable, so that
\[
A:= L \ot \k G,\quad B:= \k G \ot R 
\]
are subalgebras of $\Lambda$. We emphasize that 
each of these is not required to be, as an algebra, the tensor product of the two tensor factors. 
We assume in addition, 
\begin{itemize}
\item[(I)] $L$ and $R$ are augmented, and their augmentation ideals $L^+$ and $R^+$ are
nilpotent (or in other words, unique maximal ideals),
and 
\item[(II)] $(\k G) L^+ = L^+ (\k G)$ in $A$, and $R^+(\k G) = (\k G) R^+$ in $B$. 
\end{itemize}
As for (II) we remark that the first equality, for example, holds if $(\k G) L^+ \subset L^+ (\k G)$
or $(\k G) L^+ \supset L^+ (\k G)$, as is seen by counting dimensions. 
The assumptions imply that
\[
I_A:= L^+\ot \k G,\quad I_B:=\k G \ot R^+
\]
are ideals of $A$ and of $B$, and are, moreover, the Jacobson radicals so that
\[
A/I_A = \k G = B/I_B. 
\]
Every simple left or right $A$-module (resp.,~$B$-module)
then arises uniquely from a group homomorphism $\lambda : G \to \k^{\times}$ (or an element
of the dual group $\G$); it is defined on $\k$ by the algebra maps 
\[ 
A \overset{\mathrm{proj}}{\longrightarrow} A/I_A = \k G \overset{\k \lambda}{\longrightarrow} \k \quad 
(\text{resp.,}\  B \overset{\mathrm{proj}}{\longrightarrow} B/I_B = \k G \overset{\k \lambda}{\longrightarrow} k), 
\]
where $\mathrm{proj}$ denotes the natural projection, and $\k \lambda$ denotes the 
$\k$-linearization of $\lambda$. We denote the simple module by
\[
\k_A(\lambda)\quad (\text{resp}.,~~\k_B(\lambda)). 
\]

\begin{lemma}\label{lem:lambdamu}
Given $\lambda, \mu \in \G $, we have
\[
\k_A(\lambda) \otimes_A \Lambda \otimes_B \k_B(\mu)=
\begin{cases}
\, \k &\text{if}\hspace{2mm} \lambda=\mu,\\
\, 0  &\text{if}\hspace{2mm} \lambda\ne\mu.
\end{cases}
\]
\end{lemma}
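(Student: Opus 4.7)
The plan is to unwind the double tensor product in two steps, exploiting the tensorial presentation $\Lambda=L\ot \k G\ot R$ to identify $\Lambda$ as a free module on one side.

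First, the product map $A\ot R\to \Lambda$ is bijective and $A$-linear for left multiplication, so $\Lambda\cong A\ot R$ as left $A$-modules. Consequently
\[
\k_A(\lambda)\ot_A \Lambda \;\cong\; \k_A(\lambda)\ot_A(A\ot R) \;\cong\; R
\]
as vector spaces, under the isomorphism sending $1\ot_A(ar)$ to $\tilde\lambda(a)\,r$, where $\tilde\lambda:A\to\k$ is the character extending $\lambda$ through $A\twoheadrightarrow A/I_A=\k G$. I then transport the right $B$-action on $\Lambda$ along this identification. Using permutability of $\k G$ and $R$, for $r\in R$ and $g\in \k G$ write $rg=\sum_j g_jr_j$ in $B$ with $g_j\in\k G$, $r_j\in R$; the induced right $B$-action on $R$ becomes $r\cdot r'=rr'$ for $r'\in R$ and $r\cdot g=\sum_j \lambda(g_j)\,r_j$ for $g\in \k G$.

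Second, I compute $R\ot_B \k_B(\mu)$ as the quotient of $R$ by the relations $r\cdot b=\tilde\mu(b)\,r$, where $\tilde\mu:B\to\k$ is the analogous extension of $\mu$. The relations with $b=r'\in R$ give $r(r'-\ep(r'))=0$, collapsing $R$ modulo $R\cdot R^+$ to $R/R^+=\k$. The relations with $b=g\in \k G$ give, after reducing each $r_j$ and $r$ by $\ep$, the equation $\sum_j \lambda(g_j)\ep(r_j)=\mu(g)\ep(r)$. Since $\tilde\lambda$ is an algebra map on $B$,
\[
\sum_j \lambda(g_j)\ep(r_j)=\tilde\lambda\!\left({\textstyle\sum_j}g_jr_j\right)=\tilde\lambda(rg)=\tilde\lambda(r)\tilde\lambda(g)=\ep(r)\lambda(g),
\]
so the condition reduces to $\ep(r)(\lambda(g)-\mu(g))=0$ for every $g\in G$.

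If $\lambda=\mu$, this is automatic and the quotient is $\k$. If $\lambda\ne\mu$, some $g\in G$ satisfies $\lambda(g)\ne\mu(g)$, forcing $\ep(r)=0$ in the quotient for every $r\in R$; in particular the image of $1$ vanishes, so the whole quotient is $0$. The main subtle point is pushing the right $B$-action correctly through the first tensor reduction; once that is set up via permutability, the remaining steps amount to a clean application of the character property of $\tilde\lambda$.
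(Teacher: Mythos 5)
Your argument is correct: both steps (the identification $\k_A(\lambda)\otimes_A\Lambda\cong R$ via the tensorial presentation, the transport of the right $B$-action, and the final collapse to $\ep(r)(\lambda(g)-\mu(g))=0$) check out. This is essentially the paper's proof unwound into explicit computations; the paper reaches the same conclusion in one line by observing that $\Lambda\cong A\otimes_{\k G}B$ as an $(A,B)$-bimodule, so that the triple tensor product collapses to $\k_A(\lambda)\otimes_{\k G}\k_B(\mu)$, which is exactly what your hands-on calculation evaluates.
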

\begin{proof}
This follows since we have
\[
\k_A(\lambda) \otimes_A \Lambda \otimes_B \k_B(\mu)=
\k_A(\lambda) \otimes_{\k G} \k G \otimes_{\k G} \k_B(\mu)= 
\k_A(\lambda) \otimes_{\k G} \k_B(\mu).
\]
\end{proof}

Let $\lambda \in \G$. We define a left $\Lambda$-module by
\[
\sM(\lambda) = \Lambda \ot_B \k_B(\lambda).
\]
This is isomorphic to $L$ as a left $L$-module, and is non-zero, in particular. 

\begin{prop}\label{prop:Llambda}
$\sM(\lambda)$ includes the largest proper $\Lambda$-submodule,
which we denote by $\sI(\lambda)$.  
\end{prop}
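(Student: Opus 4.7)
The plan is to identify a candidate proper subspace of $\sM(\lambda)$ that must contain \emph{every} proper $\Lambda$-submodule; the sum of all proper submodules will then automatically be proper. I do not expect to need the sub-$\Lambda$-module structure of this candidate itself.

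First, I use the tensorial presentation $\Lambda = L \ot B$, where $B = \k G \ot R$, to identify $\sM(\lambda) = \Lambda \ot_B \k_B(\lambda)$ with $L$ as a left $L$-module via $\ell \mapsto \ell \ot 1$. This identification was already noted in the paragraph preceding the statement. Under it, the cyclic generator $1\ot 1$ corresponds to $1_L$, and the decomposition $L = \k \cdot 1 \oplus L^+$ splits $\sM(\lambda)$ into $\k(1 \ot 1)\oplus (L^+\ot 1)$ as a $\k$-vector space.

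The central step is to show that every proper $\Lambda$-submodule $N \subset \sM(\lambda)$ is contained in the subspace $L^+ \ot 1$. Suppose toward a contradiction that $N$ contains an element $(c + \ell)\ot 1$ with $c \in \k^{\times}$ and $\ell \in L^+$. By assumption (I), $L^+$ is nilpotent, so $1 + c^{-1}\ell$ is a unit of $L$: its inverse is the finite geometric sum $u := \sum_{k\ge 0} (-c^{-1}\ell)^k \in L$. Viewing $u$ as an element of $\Lambda$ via $L \hookrightarrow \Lambda$, and using that $N$ is in particular an $L$-submodule, we get $u \cdot (c+\ell)\ot 1 = c(1\ot 1) \in N$, whence $1\ot 1\in N$. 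Since $1\ot 1$ generates $\sM(\lambda)$ as a $\Lambda$-module, this forces $N = \sM(\lambda)$, a contradiction.

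Finally, the sum of all proper $\Lambda$-submodules is itself a $\Lambda$-submodule, and by the previous paragraph it is contained in the proper subspace $L^+\ot 1$, hence remains proper. Taking $\sI(\lambda)$ to be this sum gives the largest proper $\Lambda$-submodule. The only delicate point is the second step, where the local structure of $L$ (via nilpotence of $L^+$) is used to invert $1 + c^{-1}\ell$; note that I never need to verify that $L^+ \ot 1$ is stable under the actions of $\k G$ or $R$, since it serves only as an enveloping proper subspace.
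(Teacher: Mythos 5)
Your proposal is correct and is essentially the paper's own argument: the paper's one-line proof observes that every proper $\Lambda$-submodule of $\sM(\lambda)$ lies in the largest proper $L$-submodule $L^+\sM(\lambda)\,(\simeq L^+)$, which is exactly the enveloping proper subspace $L^+\ot 1$ you construct. You merely make explicit the locality of $L$ (inverting $1+c^{-1}\ell$ via nilpotence of $L^+$) that the paper leaves implicit.
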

\begin{proof}
This follows, since
every proper $\Lambda$-submodule of $\sM(\lambda)$ is included in the largest 
proper $L$-submodule \ $L^+\hspace{-0.6mm}\sM(\lambda)\, (\simeq L^+)$. 
\end{proof}

We define a simple left $\Lambda$-module by
\[
\sL(\lambda) = \sM(\lambda)/\sI(\lambda). 
\]
Let 
\[
\Lambda\text{-}\mathsf{Simple}
\]
denote the set of all isomorphism classes of simple left $\Lambda$-modules. 

\begin{theorem}\label{thm:pattern}
$\lambda \mapsto \sL(\lambda)$ gives a bijection $\G \isomto \Lambda\text{-}\mathsf{Simple}$.   
\end{theorem}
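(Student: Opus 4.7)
The plan is to prove injectivity and surjectivity of $\lambda \mapsto \sL(\lambda)$ separately, leaning on Lemma \ref{lem:lambdamu} for the former and on the tensor-hom adjunction between induction along $B \hookrightarrow \Lambda$ and restriction for the latter.

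For injectivity, the idea is to distinguish the $\sL(\lambda)$ via the right exact functor $\k_A(\lambda) \otimes_A (-)$. Applied to the canonical surjection $\sM(\mu) \twoheadrightarrow \sL(\mu)$, right exactness combined with Lemma \ref{lem:lambdamu} (recalling $\sM(\mu) = \Lambda \otimes_B \k_B(\mu)$) yields $\k_A(\lambda) \otimes_A \sL(\mu) = 0$ whenever $\lambda \ne \mu$. It therefore suffices to verify $\k_A(\lambda) \otimes_A \sL(\lambda) \ne 0$. The inclusion $\sI(\lambda) \subset L^+\sM(\lambda)$, already extracted in the proof of Proposition \ref{prop:Llambda}, provides a surjection $\sL(\lambda) \twoheadrightarrow \sM(\lambda)/L^+\sM(\lambda) \cong \k$. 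On this line $L^+$ acts trivially, and $g \in G$ acts as $\lambda(g)$, as one verifies by moving $g \in G \subset B$ across the tensor product over $B$ in $\sM(\lambda) = \Lambda \otimes_B \k_B(\lambda)$. Hence the kernel of the character $\lambda : A \to \k$ annihilates this quotient, so $\k_A(\lambda) \otimes_A \sL(\lambda) \cong \k \ne 0$, and the hypothetical isomorphism $\sL(\lambda) \cong \sL(\mu)$ forces $\lambda = \mu$.

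For surjectivity, let $V$ be any simple left $\Lambda$-module. The restriction $V|_B$ is finite-dimensional, hence contains a simple $B$-submodule, which by the classification established just before Lemma \ref{lem:lambdamu} is isomorphic to $\k_B(\lambda)$ for some $\lambda \in \G$. The inclusion $\k_B(\lambda) \hookrightarrow V|_B$ corresponds under the tensor-hom adjunction to a nonzero $\Lambda$-linear map $\sM(\lambda) = \Lambda \otimes_B \k_B(\lambda) \to V$, which by simplicity of $V$ must be surjective. Proposition \ref{prop:Llambda} then identifies $V$ with the unique simple quotient $\sL(\lambda)$ of $\sM(\lambda)$, completing the argument.

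Both halves are essentially formal once Lemma \ref{lem:lambdamu} and the inclusion $\sI(\lambda) \subset L^+\sM(\lambda)$ are in place; the one step that requires genuine care is identifying the $G$-action on $\sM(\lambda)/L^+\sM(\lambda)$ with $\lambda$ itself, since any other character would fail to separate the $\sL(\lambda)$ from one another.
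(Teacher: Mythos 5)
Your proof is correct and follows essentially the same route as the paper: surjectivity via a simple $B$-submodule of $M$ inducing a surjection $\sM(\lambda)\twoheadrightarrow M$, and injectivity by showing that $\k_A(\mu)\ot_A\sL(\lambda)$ is nonzero exactly when $\mu=\lambda$, using Lemma \ref{lem:lambdamu}. The only (harmless) deviation is that where the paper invokes Nakayama's Lemma to obtain $\k_A(\lambda)\ot_A\sL(\lambda)\ne 0$, you exhibit the one-dimensional $A$-module quotient $\sM(\lambda)/L^+\sM(\lambda)$ with character $\lambda$ directly (legitimate, since $L^+\sM(\lambda)$ is an $A$-submodule by condition (II)), which yields the same nonvanishing---though strictly your argument gives a surjection onto $\k$, hence ``$\ne 0$'', rather than the asserted isomorphism with $\k$, which is all that is needed.
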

\begin{proof}
To prove the surjectivity of the map, let $M$ be a simple left $\Lambda$-module.
Choose a simple $B$-submodule of $M$ and its non-zero element $m$. 
Then there uniquely exists $\lambda\in \G$
such that $\k_B(\lambda) \simeq B m$ as left $B$-modules. The $\Lambda$-module map 
$\Lambda \to M$ sending $1$ to $m$ induces a $\Lambda$-module map $\sM(\lambda) \to M$, which is 
surjective by simplicity of $M$. We have thus $M \simeq \sL(\lambda)$.

If $\mu \ne \lambda$ in $\G$, it follows by Lemma \ref{lem:lambdamu} that $\k_A(\mu) \otimes_A \sL(\lambda)=0$. 
Nakayama's Lemma applied to the left $A$-module $\sL(\lambda)$ shows that 
$\k_A(\lambda) \otimes_A \sL(\lambda)\ne 0$. Therefore,
$\sL(\lambda)$ is the unique (up to isomorphism) simple left $\Lambda$-module $M$ such that $\k_A(\lambda)\ot_A M \ne 0$. 
This shows the injectivity of the map. 
\end{proof}


\section{Simple modules over finite quantum groups}\label{sec:quantum_group}

\subsection{}\label{subsec:YD}
In this subsection the base field $\k$ may be arbitrary, and $G$ is an abelian group
which may not be finite. 
 
A $G$-graded vector space $V$ is presented as $V = \bigoplus_{g\in G}V_g$ with
$g$-component $V_g$. A left $\k G$-module structure on a vector space $V$ is 
presented as $g \tr v$, where $g \in G$ and $v \in V$. A \emph{left Yetter-Drinfel'd module}
over $G$ is a $G$-graded vector space $V = \bigoplus_{g\in G}V_g$ which is endowed with a left 
$\k G$-module structure such that
\[
f \tr V_g =V_g \ \text{for all} \ f, g \in G.
\]
See \cite[Definition 1.2, p.6]{AS1}. 
The left Yetter-Drinfel'd modules over $G$ form a category, ${}^G_G\YD$, whose morphisms
are $\k G$-linear $G$-graded maps. 

Let $V, \ W \in {}^G_G\YD$. Then the
tensor product $V \ot W$ over $\k$ is naturally an object of the category so that
\[
(V \ot W)_g = \bigoplus_{f\in G}V_f \ot W_{f^{-1}g}, \quad
g\tr(v\ot w) = (g\tr v)\ot(g\tr w),
\]
where $g \in G$, \ $v \in V$ \ and \ $w \in W$. With respect to this tensor product
${}^G_G\YD$ is a tensor category; the unit object is $\k$ that is trivially $G$-graded,
$\k =(\k)_1$, and is given the trivial $\k G$-module structure $g \tr 1=1$,\ $g \in G$. 
Moreover, the tensor category is braided with respect to the braiding
\begin{equation}\label{eq:braiding}
c=c_{V,W} : V \ot W \isomto W \ot V,\quad c(v \ot w) = (g \tr w) \ot v,
\end{equation}
where $v \in V_g$ with $g \in G$, and $w \in W$. 

Since ${}^G_G\YD$ is thus a braided tensor category, the concept of Hopf-algebra objects in it is naturally
defined; see \cite[Sect.~10.5]{Mon}. 
Let $L$ be such an object. Its antipode $s : L \to L$ is the convolution-inverse
of the identity map $\mathrm{id}$ on $L$, which is necessarily a morphism of the category. 
Since $L$ is a left $\k G$-module algebra, the tensor product $L \ot \k G$ turns into the ($G$-graded)
algebra $L\rtimes \k G$ of smash (or semi-direct) product. 
Since $L$ is a $G$-graded coalgebra,
$L \ot \k G$ turns into the (right $\k G$-module) coalgebra $L \lcosmash \k G$ of smash coproduct;
see \cite[Definition 10.6.1, p.207]{Mon}.  Its structure is given by
\[
\Delta(x \ot f) = \sum_i (x_i \ot  g_i f) \ot (y_i \ot f), \quad \ep(x \ot f) =\ep(x),
\]
where $x \in L$,\ $f \in G$, and $\Delta(x)=\sum_i x_i \ot y_i$ is the coproduct on $L$ 
with $y_i \in L_{g_i}$, $g_i \in G$. 
Here and in what follows, $\ep$ denotes the counit of any coalgebra.
With the algebra and coalgebra structures above, $L \ot \k G$ turns into
an ordinary Hopf algebra, whose antipode $S$ is given by
\[
S(x \ot f) = (1 \ot f^{-1}g^{-1})(s(x)\ot 1)\ \, (= s((gf)^{-1}\tr x)\ot (gf)^{-1}), 
\]
where $x \in L_g$ with $g \in G$, and $f \in G$. This Hopf algebra is denoted by
\[
L \lboson \k G, 
\]
and is called the \emph{bosonization} of $L$; this was first constructed by Radford \cite{R} 
under the name ``biproduct", and then renamed as above by Majid \cite{Maj}.  
This Hopf algebra has the Hopf algebra maps from and to $\k G$
\begin{equation}\label{eq:triple}
\iota : \k G \to L \lboson \k G, \ \iota(g)=1\ot g,\quad \pi = \ep \ot \mathrm{id} : 
L \lboson \k G \to \k G 
\end{equation}
such that $\pi \circ \iota =\mathrm{id}$. Every triple $(H,\iota, \pi)$ that consists of 
a Hopf algebra $H$ and Hopf algebra maps $\iota$, $\pi$ as above arises uniquely (up to isomorphism)
from a Hopf-algebra object in ${}^G_G\YD$, in this way; see 
\cite[Theorem 3]{R} and \cite[Proposition 1.1]{CDMM}.

Let $V \in {}^G_G\YD$. The tensor algebra $T(V)$ on $V$ uniquely turns into
a Hopf-algebra object in ${}^G_G\YD$ so that every element $v$ of $V$ is primitive, or in notation,
$\Delta(v)=v\ot 1 + 1 \ot v$. Since this has the trivial coradical $\k$, every 
quotient bialgebra object is necessarily a Hopf-algebra object. 
We say that
$T(V)=\bigoplus_{n\ge 0}T^n(V)$ is \emph{graded} by $\mathbb{N}=\{ 0,1,2,\dots \}$, meaning that 
every homogeneous component is a sub-object in ${}^G_G\YD$, and it is $\mathbb{N}$-graded
as an algebra and as a coalgebra. A quotient Hopf-algebra object $T(V)/I$ of $T(V)$ such that
$I$ is $\mathbb{N}$-homogeneous with $I \cap V = \{ 0 \}$ is called a \emph{pre-Nichols algebra}
 \cite[Definition 2.1]{M} of $V$; it is characterized as an $\mathbb{N}$-graded Hopf-algebra
object $L = \bigoplus_{n\ge 0}L(n)$ with $L(0)=\k$, $L(1)=V$, that is generated by $V$. 
It is called the \emph{Nichols algebra} \cite[p.16]{AS1}
of $V$, if $I$ is the largest possible, or equivalently if
any homogeneous component of degree $>1$ does not contains a non-zero primitive. 

A \emph{right Yetter-Drinfel'd module}
over $G$ is a $G$-graded vector space $V = \bigoplus_{g\in G}V_g$ which is endowed with a right 
$\k G$-module structure, $v \tl g$, such that 
$V_g  \tl f=V_g$  for all  $f, g \in G$.
The right Yetter-Drinfel'd modules over $G$ form a category, $\YD^G_G$, which is again 
a braided tensor category. Since $G$ is abelian, it is identified with ${}^G_G\YD$
as a tensor category, but has the distinct braiding
\[
c'=c'_{V,W} : V \ot W \isomto W \ot V,\quad c'(v \ot w) = w \ot (v\tl g),
\]
where $v \in V$, and $w \in W_g$ with $g \in G$. 
The Hopf algebra which naturally arises from a Hopf-algebra object $R$ in $\YD^G_G$ is denoted by
\[ \k G \rboson R, \]
and is called again the \emph{bosonization} of $R$.
(Pre-)Nichols algebras in $\YD^G_G$ are defined as before.

It is known that there exists a braided tensor-equivalence between
the left and the right Yetter-Drinfel'd modules in a generalized situation; see
\cite[Proposition 2.2.1]{AG}. This fact, however, will not be used in the sequel. 

The following important result is due to Iv\'{a}n Angiono \cite{An}. 

\begin{theorem}[\text{\cite[Theorem 4.13]{An}}]\label{thm:in_char_zero}
Assume $\mathrm{char}\, \k = 0$, and that the abelian group $G$ is finite. Then every finite-dimensional 
pre-Nichols algebra in ${}_G^G\YD$ or in $\YD{}_G^G$ is necessarily Nichols. 
\end{theorem}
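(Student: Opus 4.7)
The plan is to reduce to the diagonal-type setting and then invoke Angiono's explicit presentation of finite-dimensional Nichols algebras of diagonal type by generators and relations. Let $V\in {}^G_G\YD$ and let $L$ be a finite-dimensional pre-Nichols algebra of $V$; we have the canonical surjection $\pi : L \twoheadrightarrow \mathcal{B}(V)$ of $\mathbb{N}$-graded Hopf-algebra objects in ${}^G_G\YD$ which is the identity on $V$, and the goal is to show $\ker \pi = 0$.

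First, because $G$ is finite abelian and $\mathrm{char}\,\k = 0$, the $\k G$-module $V$ is completely reducible, and compatibility of the $G$-grading with the $G$-action lets us choose a basis $x_1,\dots,x_\theta$ of $V$ in which each $x_i$ is homogeneous and a $G$-eigenvector; thus $V$ is of diagonal type with braiding matrix $(q_{ij})$. Second, apply Kharchenko's PBW theorem to $L$: it furnishes a PBW basis indexed by Lyndon super-letters $u_\alpha$ with heights $h^L_\alpha \in \mathbb{N}_{\ge 1} \cup \{\infty\}$, and the analogous basis for $\mathcal{B}(V)$ has heights $h^{\mathcal{B}}_\alpha$ that are divisors of $h^L_\alpha$ (the whole root datum of $L$ coincides with that of $\mathcal{B}(V)$, since $L$ and $\mathcal{B}(V)$ are generated by the same space $V$). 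Because $L$ is finite-dimensional, each $h^L_\alpha$ is finite.

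The core of the argument is then Angiono's explicit presentation of finite-dimensional Nichols algebras of diagonal type: he produced, in characteristic $0$, a minimal set of homogeneous generators for the kernel of $T(V) \twoheadrightarrow \mathcal{B}(V)$, consisting of powers of Cartan root vectors, quantum Serre-type relations $(\mathrm{ad}_c\, x_i)^{1-a_{ij}}(x_j)$, and a finite list of additional relations attached to non-Cartan roots. For each such generator $r$, an inductive argument on the degree shows that its image in any finite-dimensional pre-Nichols quotient must already vanish; indeed, if $r\neq 0$ in $L$, then $r$ is a non-zero primitive-like element whose homogeneous component generates (via the braided commutators and skew-derivations $\partial_i$) an infinite PBW string, contradicting $\dim L < \infty$. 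Consequently the defining ideal of $\mathcal{B}(V)$ is contained in the defining ideal of $L$, forcing $\pi$ to be an isomorphism.

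The main obstacle is exactly this last step: showing that every generator in Angiono's list is a consequence of finite-dimensionality in an \emph{arbitrary} pre-Nichols algebra, not just in the Nichols quotient. This depends on Heckenberger's classification of Nichols algebras of diagonal type with finite root system and on the fine analysis of the Weyl groupoid action on pre-Nichols algebras, which is the technical heart of \cite{An}. Finally, the case of $\YD^G_G$ follows by the same argument applied to the opposite braiding, or equivalently via the braided tensor equivalence ${}^G_G\YD \simeq \YD^G_G$ mentioned just before the theorem.
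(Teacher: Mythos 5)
The paper does not prove this statement at all: it is quoted verbatim as Angiono's result \cite[Theorem 4.13]{An} and used as a black box, so there is no internal proof to compare yours against. What you have written is a reasonable roadmap of how the argument in \cite{An} is organized (reduction to diagonal type, Kharchenko's PBW basis, Angiono's minimal presentation of the defining ideal of $\mathcal{B}(V)$), but as a proof it has a genuine gap, and you name it yourself: the decisive claim that every generator of $\ker\bigl(T(V)\twoheadrightarrow \mathcal{B}(V)\bigr)$ in Angiono's list already vanishes in an \emph{arbitrary} finite-dimensional pre-Nichols quotient $L$ is exactly the content of the cited theorem. Asserting that ``an inductive argument on the degree shows'' this, and then conceding that the step rests on the Weyl-groupoid analysis which is ``the technical heart of \cite{An}'', means the proposal is a citation-level summary rather than a proof; nothing substantive has been established beyond what the paper itself imports by reference.

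Two subsidiary points would also need repair if you wanted to flesh this out. First, the claim that ``the whole root datum of $L$ coincides with that of $\mathcal{B}(V)$'' is not automatic: since $\ker\bigl(T(V)\to L\bigr)\subseteq \ker\bigl(T(V)\to \mathcal{B}(V)\bigr)$, the PBW basis of $L$ may a priori have more super-letters and larger heights than that of $\mathcal{B}(V)$ (the inclusion of heights goes the wrong way for your ``divisor'' statement); deducing finiteness of the root system of $L$ from $\dim L<\infty$ and matching it against Heckenberger's classification is part of the work, not a starting point. Second, the passage to $\YD^G_G$ is harmless here because both categories yield braided vector spaces of diagonal type over a finite abelian $G$, but note that the paper explicitly declines to use the braided equivalence of \cite[Proposition 2.2.1]{AG}; the cleaner route is simply to observe that the diagonal-type reduction applies verbatim to the opposite braiding.
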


\subsection{}\label{subsec:H}
Return to the situation given by Notation \ref{notation1}. 
In addition assume, throughout in what follows, that we are in the situation given below. 

\begin{notation}\label{notation2}
We let
\[ 
V \in {}^G_G\YD , \quad W \in \YD^G_G
\]
be finite-dimensional objects in the respective Yetter-Drinfel'd module
categories such that
\begin{equation}\label{eq:symmetric}
(w\tl f) \otimes (g \tr v)=w \ot v \ \, \text{on} \ \, W_g \ot V_f
\end{equation}
for all $f, g \in G$, $v \in V_f$ and $w \in W_g$. Choose arbitrarily a linear map
\begin{equation}\label{eq:delta}
\delta : W \ot_{\k G} V \to \k.
\end{equation}
Let us write $\delta(w, v)$ for $\delta(w\ot v)$; it then holds that\ 
$\delta(w \tl g, v)=\delta(w, g \tr v)$,\ $g \in G$. 
Suppose that we are given 
finite-dimensional pre-Nichols algebras,
\[ 
L \ \, \text{of} \ \, V \ \, \text{in} \ \, {}^G_G\YD ,
\quad R \ \, \text{of} \ \, W \ \, \text{in} \ \,  \YD^G_G.
\]
By Angiono's Theorem \ref{thm:in_char_zero} these are necessarily Nichols, if $\mathrm{char}\, \k =0$. 
\end{notation}

\begin{prop}\label{prop:def_of_H}
There uniquely exists a Hopf algebra which satisfies: 
\begin{itemize}
\item[(i)]~~$H$ includes $L$, $\k G$ and $R$ as subalgebras so that
\begin{equation}\label{eq:tensor_pres_H}
H = L \ot \k G \ot R; 
\end{equation}
\item[(ii)]~~In $H$, \ $L \ot \k G$ is a Hopf subalgebra which coincides with
the bosonization $L \lboson \k G$ of $L$, and $\k G \ot R$ is a Hopf subalgebra
which coincides with the bosonization $\k G \rboson R$ of $R$; 
\item[(iii)]
We have
\begin{equation}\label{eq:wv}
w v = v w + \delta(w, g \tr v)\, (f-g) \ \, \text{in} \ \, H 
\end{equation}
for all $f, g \in G, \ v \in V_f$ and $w \in W_g$. 
\end{itemize}
\end{prop}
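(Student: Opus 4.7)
\emph{Uniqueness.} Suppose $H$ satisfies (i)--(iii). Any product of generators taken from $L \cup \k G \cup R$ can be reduced to the normal form prescribed by (i) via iterated straightening: the bosonization relations inside $A = L \lboson \k G$ (from (ii)) move $\k G$ past $L$ on either side, the bosonization relations inside $B = \k G \rboson R$ move $\k G$ past $R$, and relation (iii) moves $W$ past $V$. Since $L$ is generated by $V$, $R$ by $W$, and these three pieces together generate $H$ as an algebra, the entire multiplication is forced. The counit, coproduct, and antipode, being algebra (anti-)morphisms pinned down by (ii) on the generators, are likewise determined.

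\emph{Existence.} I propose to construct $H$ as the quotient of the amalgamated free product $A *_{\k G} B$, which is a Hopf algebra since $A$ and $B$ share $\k G$ as a common Hopf subalgebra, by the two-sided ideal generated by the expressions $wv - vw - \delta(w, g \tr v)(f - g)$ for $v \in V_f$, $w \in W_g$. The surjection $L \ot \k G \ot R \to H$ coming from this presentation is clear; the heart of the matter is injectivity. I would prove it by defining directly a multiplication on the vector space $L \ot \k G \ot R$ through an explicit straightening formula extending (iii) — equivalently, by building a right $R$-module algebra structure on $A$ (or a left $L$-module algebra structure on $B$) whose generating action on $V \subset L$ recovers \eqref{eq:wv} — and then identifying the resulting algebra with $H$. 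The coproduct is obtained by observing that $\Delta_A$ and $\Delta_B$ agree on the common subalgebra $\k G$, hence descend to the free product, and then verifying that the coproducts of both sides of \eqref{eq:wv} coincide: this reduces, using that $v$ is primitive modulo $\k G$ in $A$ and $w$ is primitive modulo $\k G$ in $B$, to a compact identity involving the deformation term $\delta(w, g \tr v)(f - g)$ and the group-like $f, g \in G$. The antipode is produced similarly by extending those of $A$ and $B$.

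\emph{Main obstacle.} The crucial and most delicate step is confluence: verifying that the straightening rule defined by (iii) is compatible with every defining relation of the pre-Nichols algebras $L$ and $R$. Concretely, whenever $r(v_1,\dots,v_n) = 0$ is a relation in $L$, one must show that $w \cdot r(v_1,\dots,v_n)$ straightens to zero in $L \ot \k G$, and symmetrically for $R$. This is a diamond-lemma style calculation, and it is exactly here that the symmetry condition \eqref{eq:symmetric} on the braidings between $V$ and $W$ and the $\k G$-equivariance $\delta(w \tl g, v) = \delta(w, g \tr v)$ are used essentially: they guarantee that iterated applications of (iii) are independent of the order, so that the ideal collapses no further than the desired vector-space dimension. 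Once this confluence is established, the bialgebra axioms and the existence of the antipode reduce to checking on the generators, which is then automatic from (ii) and the calculations already performed for $\Delta$.
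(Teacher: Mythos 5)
Your uniqueness argument and your identification of where the difficulty lies are both sound, but the existence part has a genuine gap: the confluence step that you yourself flag as ``crucial and most delicate'' is exactly the content of the proposition, and your plan for carrying it out would not work as stated. You propose to check that the straightening rule \eqref{eq:wv} is compatible with ``every defining relation'' of $L$ and of $R$. But $L$ and $R$ are arbitrary finite-dimensional pre-Nichols algebras; their defining ideals are not given by any explicit list of relations (in general one does not even have explicit generators of the defining ideal of a Nichols algebra), so a diamond-lemma verification relation-by-relation is not available. Nor is it true in any self-evident way that the symmetry condition \eqref{eq:symmetric} ``guarantees'' order-independence of the straightening --- that implication is precisely the theorem to be proved, and asserting it does not prove it.

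The paper avoids this entirely by reducing to a cocycle-deformation theorem, \cite[Theorem 3.10]{M}. Concretely: inside $K=\k G \rboson R$ one sets $U=S(W)$ and $P=S(R)$; then $P$ is the left coideal subalgebra $K^{\mathrm{co}\,\k G}$, it is a pre-Nichols algebra of $U$ in ${}^G_G\YD$, and $K=P\lboson \k G$. One checks that \eqref{eq:symmetric} says exactly that $c_{U,V}\circ c_{V,U}=\mathrm{id}$, and that $\delta$ induces a $\k G$-linear map $\lambda:U\ot V\to\k$. The cited theorem then produces $H$ as a two-cocycle deformation of the bosonization of $L\ot P$; since cocycle deformation changes neither the underlying vector space nor the coalgebra, the tensorial presentation $H=L\ot\k G\ot R$ and the claims (ii), (iii) come for free, with no confluence computation. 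If you wish to keep your quotient-of-amalgamated-free-product presentation, you must either import such a flatness result (via cocycle deformation, a bialgebra pairing, or a Drinfel'd-double argument), or restrict to cases where the relations of $L$ and $R$ are explicitly known, as Krop and Radford do for quantum linear spaces.
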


\begin{proof}
The uniqueness of the Hopf algebra is obvious. 
We prove the existence.  

Let $K := \k G \rboson R$. This Hopf algebra is $\mathbb{N}$-graded so that $K(n)=\k G \ot R(n)$, $n \ge 0$. 
In $K$, let
\[
U=S(W),\quad P=S(R)
\]
be the images of $W$ and of $R$, respectively, by the antipode $S$. Note that $S(w)=-wg^{-1}$ if $w \in W_g$. 
Moreover, $S$ is $\mathbb{N}$-graded, and $P$ coincides with the left coideal subalgebra
(see Remark \ref{rem:coideal_subalgebra} below)
\[
K^{\mathrm{co} \k G}=\{ x \in K \mid (\mathrm{id}\ot \pi)\circ \Delta(x)= x \ot \pi(1) \}, \]
of right $\k G$-coinvariants in $K$, 
where $\pi=\ep \ot \mathrm{id} : H= R\ot \k G \to \k G$ denotes the natural projection onto $\k G$. 
It then follows that
\[
U = \bigoplus_{g\in G} U_{g^{-1}}, \ \, \text{where}\ \, U_{g^{-1}}=\{ wg^{-1} \mid w \in W_g\},
\]
and this $U$ is an object in ${}_G^G\YD$, which has $U_g$ as the $g$-component, and whose left
$\k G$-module structure is defined by
\[
f \tr wg^{-1} := f(wg^{-1})f^{-1} \, (=(w \tl f^{-1})g^{-1}),\ \, f \in G, \ w \in W_g, \ g \in G.
\]
(One should note that the expression $w g^{-1}$ of every element of $U$ is unique.) 
Moreover, $P$ is a pre-Nichols algebra of $U$ such that $K= P \lboson \k G$; recall from 
the sentence following \eqref{eq:triple} the construction of triples.

We see that the objects $V$ and $U$ in ${}_G^G\YD$ are \emph{symmetric} in the sense 
of \cite[Definition 2.1]{M}, or namely, the braidings $c_{V,U}$ and $c_{U,V}$ are inverse to each other;
see \eqref{eq:braiding}. 
Indeed, if we choose $v \in V_f$ and $wg^{-1} \in U_{g^{-1}}$ with $w \in W_g$, then
\[ c_{U,V}\circ c_{V,U}(v \ot wg^{-1}) = c_{U,V}((f\tr wg^{-1})\ot v) = (g^{-1} \tr v)\ot (w\tl f^{-1})g^{-1}, \]
which coincides with $v \ot wg^{-1}$ by \eqref{eq:symmetric}. 

We wish to apply \cite[Theorem 3.10]{M} to the pre-Nichols algebras of the mutually symmetric
$V$ and $U$, choosing appropriately $\lambda : U \ot V \to \k$ as the
map given in \cite[Notation 3.1]{M}. We define $\lambda : U \ot V \to \k$
by
\[ 
\lambda(w g^{-1} \ot v) := \delta(w,~v),\quad v \in V, \ w \in W_g, \ g \in G. 
\]
This is a left $\k G$-module map with $\k$ regarded as the trivial $\k G$-module, since 
we have that for $f \in G$, 
\[
\lambda((f\tr w g^{-1})\ot (f\tr v))= \delta(w \tl f^{-1},~f \tr v) = \delta(w,~f^{-1}f \tr v)
=\lambda(w g^{-1}, v). 
\]
In view of \cite[Eq.~(3.4)]{M}, we compute in $T(V \oplus U)\lboson \k G$, 
\begin{align*}
&wg^{-1} \ot v - c_{U,V}(w g^{-1} \ot v) + \lambda(wg^{-1} \ot v) - g^{-1}f\, \lambda(wg^{-1} \otimes v)\\
&\quad = wg^{-1} \ot v - (g^{-1}\tr v)\ot wg^{-1} + \delta(w,~v)\, (1-g^{-1}f),
\end{align*}
where $v \in V_f$ and $w \in W_g$.
It now follows from \cite[Theorem 3.10]{M} that there uniquely exists a Hopf algebra $H$ which satisfies:
\begin{itemize}
\item[(i)]~~$H$ includes $L$ and $\k G \rboson R\, (=\k G \ot R)$ as subalgebras so that 
\[
H = L \ot (\k G \rboson R) \ (= L \ot \k G \ot R);
\]
\item[(ii)] In $H$, \ $\k G \rboson R$ is a Hopf subalgebra, and the subalgebra $L \ot \k G$ is a Hopf
subalgebra which coincides with $L \lboson \k G$; 
\item[(iii)] In $H$ \ we have
\[ wg^{-1}v - (g^{-1}\tr v)wg^{-1} + \delta(w,v)\, (1-g^{-1}f)=0 \]
or equivalently, 
\[ \big[w (g^{-1}\tr v) - (g^{-1}\tr v) w - \delta(w,\ v)\, (f-g)\big]\, g^{-1} =0 \]
for all $f, g \in G, \ v \in V_f$ and $w \in W_g$. 
\end{itemize}
We see that this $H$ is what is required. 
\end{proof}

\begin{rem}\label{rem:coideal_subalgebra}
Recall from \cite[Sect.~1]{T} that for a Hopf algebra $K$ in general, 
a \emph{left} (resp., \emph{right}) \emph{coideal subalgebra} is a subalgebra $J\subset K$ that
is a left (resp., right) coideal, $\Delta(J) \subset K \ot J$ (resp., $\Delta(J) \subset J \ot K$).
The dual notion is a \emph{quotient left} (resp., \emph{right}) $K$-\emph{module coalgebra} of $K$.
\end{rem} 

\begin{example}\label{ex:two_examples}
Let $n >0$ be an integer. Let $V$ and $W$ be vector spaces with bases 
$(v_i)_{1\le i \le n}$ and $(w_i)_{1\le i \le n}$, respectively. 
For each $1\le i \le n$, choose elements $f_i, g_i \in G$ and $\chi_i \in \G$. 
We let $V\in {}_G^G\YD$ and $W \in \YD_G^G$, defining 
\[
g\tr v_i=\chi_i(g)\, v_i, \ \, v_i \in V_{f_i};\quad 
w_i\tl g=\chi_i(g)\, w_i, \ \, w_i \in W_{g_i},
\]
where $g \in G$.
Note that every $v_i$ or $w_i$ spans a sub-object.
One sees that Eq.~\eqref{eq:symmetric} holds if and only if 
\begin{equation}\label{eq:chi}
\chi_i(f_j)\, \chi_j(g_i)=1 \ \, \text{for all} \ \, 1\le i,j \le n. 
\end{equation} 
For each $1\le i \le n$,
choose arbitrarily an element $c_i\in \k$, and let $\delta(w_i,v_j)=\delta_{i,j}c_i$, \ $1\le i, j\le n$. 
This indeed defines $\delta : W \ot_{\k G} V \to \k$. 

If \eqref{eq:chi} is satisfied, then the Hopf algebra $H$ as above is constructed from $L$, $R$ and $\delta$. 
Note that in this $H$, we have for every $g\in G$, and $1 \le i \le n$,
\begin{align*}
g v_i&=\chi_i(g)\, v_ig,\quad \Delta(v_i)=f_i\ot v_i + v_i \ot 1;\\
w_i g&=\chi_i(g)\, g w_i,\hspace{2.5mm} \Delta(w_i)=1 \ot w_i + w_i \ot g_i.
\end{align*}
\begin{itemize}
\item[(1)]
Assume \eqref{eq:chi}, or 
$\chi_i(f_j)=\chi_j(g_i)^{-1}$ for all $1\le i,j \le n$. 
Assume that the Yetter-Drinfel'd modules $\k x_i$, $1 \le i \le n$
(or equivalently,
$\k y_i$, $1 \le i \le n$) are pairwise symmetric, or explicitly, 
\[
\chi_i(f_j)\, \chi_j(f_i) = 1 \ \, \text{for all} \ \, 1\le i<j \le n. 
\]
We assume in addition, that $\chi_i(f_i) \ne 1$ for all $1\le i \le n$, and 
let $m_i=\mathrm{ord}(\chi_i(f_i))\, (> 1)$ denote its order in $\k^{\times}$; it equals
$\mathrm{ord}(\chi_i(g_i))$. Then the Nichols algebra $L$ of $V$ 
is generated by $v_1, \dots, v_n$, and is defined by
\[
v_jv_i = \chi_i(f_j)\, v_iv_j, \ 1 \le i<j \le n;\quad v_i^{m_i}=0, \ 1 \le i \le n. 
\]
This is indeed the braided tensor product 
\begin{equation}\label{eq:braided_tensor_product}
\k[v_1]/(v_1^{m_1})\ot \dots \ot \k[v_n]/(v_n^{m_n})
\end{equation}
of the Nichols algebras $\k[v_i]/(v_i^{m_i})$ of $\k v_i$, $1\le i \le n$. The Nichols algebra
$R$ of $W$ is described in the same way, with $v_i$ replaced by $w_i$. Our Hopf algebra
$H$ then coincides with the Hopf algebra $H(\mathscr{D})$ 
constructed by Krop and Radford \cite[Sect.~3.2, p.2579]{KR}, 
provided we choose $\delta$ so that $c_i\ne 0$ for all $1\le i \le n$. 

Suppose $\mathrm{char}\, \k = p >0$. If each $m_i$ above is supposed to be the
multiple $p^{e_i}\mathrm{ord}(\chi_i(f_i))$ of $\mathrm{ord}(\chi_i(f_i))$ by some power $p^{e_i}$, $e_i \ge0$
of $p$, then 
\eqref{eq:braided_tensor_product} gives a finite-dimensional pre-Nichols algebra of $V$, since
$v_i^{m_i}$ remains primitive in $\k[v_i]=T(\k v_i)$. 
We may suppose that $\chi_i(f_i)=1$ or $\mathrm{ord}(\chi_i(f_i))=1$,
so long as we then let $e_i$ positive.
\item[(2)]
Let $f_i=g_i^{-1}$,\ $1\le i \le n$. 
Assume $\mathrm{char}\, \k = 0$, and suppose that $L$ and $R$ are the Nichols algebras of $V$ and of $W$,
respectively. 

As was proved by Andruskiewitsch and Schneider
(see \cite[Theorem 4.6 (ii)]{AS1})
and was later strengthened by Heckenberger \cite{H},
$L$ and $R$ are finite-dimensional, 
if the $n \times n$ matrix $\big(q_{ij}\big)$ given by $q_{ij}=\chi_j(g_i)$
satisfies
\[ q_{ij}q_{ji}= q_{ii}^{a_{ij}} \ \, \text{for all} \ \, 1\le i,j \le n \]
for some $n \times n$ Cartan matrix $\big(a_{ij}\big)$, and if for all $1\le i \le n$, 
the order $\mathrm{ord}(q_{ii})$
of $q_{ii}$ is larger than $\mathrm{max}\{1,-a_{ij}\mid 1 \le j \le n \}$. 

Let $\ell > 1$ be an odd integer, and $q \in \k^{\times}$ a primitive $\ell$-th root of $1$. 
Suppose $G=(\mathbb{Z}/(2\ell))^n$, and that $g_i$, $1\le i \le n$,
are the standard generators. Given an $n \times n$ Cartan matrix $\big(a_{ij}\big)$
with its standard symmetrization $\big(d_ia_{ij}\big)$, 
define a symmetric bi-multiplicative map $\tau : G \times G \to \k^{\times}$
by $\tau(g_i,g_j)= q^{d_ia_{ij}}$, $1\le i,j \le n$. 
If the Cartan matrix contains $-3$ as an entry, we add the assumption that
$\ell$ is not divided by $3$. 
Supposing $f_i=g_i^{-1}$, and
that $\chi_i$ are given by $\chi_i(g)=\tau(g_i,g)\, (=\tau(g,g_i))$, we have
$V\in {}_G^G\YD$ and $W \in \YD_G^G$ as above, whose Nichols algebras $L$ and $R$ are
finite-dimensional since the matrix $\big( q^{d_ia_{ij}}\big)$ satisfies the assumption
posed to $\big( q_{ij}\big)$ above. Note that \eqref{eq:chi} holds by the symmetry of $\tau$. 
If we choose $\delta$ again so that $c_i\ne 0$ for all $1\le i \le n$, then the resulting Hopf
algebra $H$ of ours coincides with the quantum Frobenius Kernel constructed by Lusztig \cite{L,Lu}. 
\end{itemize}
\end{example}

\begin{prop}\label{prop:permutableH}
In $H$ the subalgebras in each of (i)--(iv) below are permutable.
\begin{itemize}
\item[(i)]~~$L$\ \, and\ \, $\k G$;\hspace{20mm} $\mathrm{(ii)}$\ \, $\k G$\ \, and\ \, $R$;
\item[(iii)]~~$L$\ \, and\ \, $\k G \ot R$;\hspace{13.5mm} $\mathrm{(iv)}$\ \, $L \ot \k G$\ \, and\ \, $R$.
\end{itemize}
\end{prop}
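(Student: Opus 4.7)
The plan is to deduce (i) and (ii) directly from the bosonization structure, and to obtain (iii) and (iv) by a uniform subspace argument showing that both sides equal all of $H$.

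For (i), the subalgebra $L \ot \k G$ equals the bosonization $L \lboson \k G$ by Proposition~\ref{prop:def_of_H}(ii), and the smash-product relation $g\, x = (g \tr x)\, g$ for $g \in G$, $x \in L$ gives $(\k G) \cdot L = L \cdot (\k G)$ inside $H$. Item (ii) follows by the mirror statement for $\k G \rboson R$.

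For (iii) and (iv), the central step is to prove the inclusion
\[
L \cdot R \ \subseteq\ R \cdot L \cdot (\k G) \qquad \text{in } H.
\]
Granting this, set $T := R \cdot L \cdot (\k G)$. Then $T$ contains the algebra generators $V,\, W,\, G$ of $H$, and using (i), (ii) together with the displayed inclusion one verifies $T \cdot T \subseteq T$; hence $T$ is a subalgebra containing a set of generators of $H$, so $T = H$. On the other hand $L \cdot R \cdot (\k G) = L \cdot (\k G) \cdot R = H$ by (ii) and the tensorial presentation \eqref{eq:tensor_pres_H}. Hence every rearrangement $A \cdot B \cdot C$ of $L,\, \k G,\, R$ equals $H$ as a subspace. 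For (iii), this yields
\[
L \cdot (\k G \ot R) = L \cdot (\k G) \cdot R = H = (\k G) \cdot R \cdot L = (\k G \ot R) \cdot L,
\]
and (iv) follows in the same manner.

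To establish the key inclusion I proceed by two nested inductions along the $\mathbb{N}$-gradings of the pre-Nichols algebras $L$ and $R$. The base case for generators is immediate from \eqref{eq:wv}: for $v \in V_f$ and $w \in W_g$,
\[
v\, w \ =\ w\, v\ -\ \delta(w,\, g \tr v)\,(f-g) \ \in\ R \cdot L \,+\, \k G \ \subseteq\ R \cdot L \cdot (\k G).
\]
Fixing $v \in V$, I extend to $v \cdot y \in R \cdot L \cdot (\k G)$ for all $y \in R$ by induction on $\deg y$: writing $y = w\, y'$, apply the relation to $v w$, and absorb the correction term $(f - g)\, y'$ using (ii) in the form $(\k G) \cdot R = R \cdot (\k G)$. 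A further induction on $\deg x$ for $x \in L$, absorbing $(\k G) \cdot L = L \cdot (\k G)$ via (i), yields $x \cdot y \in R \cdot L \cdot (\k G)$ in general. The main obstacle is the bookkeeping of these nested inductions: every commutation of a generator creates a $\k G$-valued correction, and it is precisely the permutability in (i), (ii) together with the $\mathbb{N}$-gradings that allow the correction to be re-absorbed inside the target subspace $R \cdot L \cdot (\k G)$ at each step.
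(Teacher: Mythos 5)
Your proof is correct, and for the substantive parts (iii) and (iv) it takes a genuinely different route from the paper. Parts (i) and (ii) are essentially the paper's argument: the smash-product relation $gx=(g\tr x)g$ is just the inner automorphism $x\mapsto gxg^{-1}$ restricted to $L$, resp.\ $R$. For (iii) and (iv) the paper instead applies the bijective antipode $S$ of $H$ to the presentation $H=(L\lboson \k G)\ot R$: since $S$ is an algebra anti-automorphism stabilizing the Hopf subalgebras $L\lboson \k G$ and $\k G\rboson R$ (the latter being $S(R)\ot \k G$), one gets $H=S(R)\ot(L\ot\k G)=R\ot \k G\ot L$ in two lines, and (iv), hence (iii) by symmetry, follows. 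You stay entirely at the algebra level: the straightening relation \eqref{eq:wv} plus a double induction along the $\mathbb{N}$-gradings yields $LR\subseteq RL(\k G)$, whence $T=RL(\k G)$ is a subalgebra containing the generators $V$, $W$, $G$ and so equals $H$, making every relevant product of $L$, $\k G$, $R$ the whole of $H$. Your inductions do close up: the $\k G$-valued correction terms are absorbed via (i) and (ii), and the decompositions $L(n)=V\cdot L(n-1)$, $R(n)=W\cdot R(n-1)$ hold because pre-Nichols algebras are generated in degree one. The trade-off is brevity versus generality: the antipode argument is shorter but uses the Hopf structure (bijectivity of $S$, stability of the two bosonizations), while your argument is purely ring-theoretic, needing only the presentation and the commutation rule, and is thus closer in spirit to the triangular-decomposition formalism of Section \ref{subsec:pattern}.
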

\begin{proof}
(i), (ii)\ These follow since for every $g \in G$, the inner automorphism $x \mapsto g x g^{-1}$ 
on $H$ stabilizes $L$ and $R$ so that
\[ g x g^{-1}=g \tr x, \ \, x \in L; \quad g x g^{-1}=x \tl g^{-1}, \ \, x \in R.\]

(iv)\ Apply the (bijective) antipode $S$ of $H$ to the both sides of 
$H=(L\lboson \k G) \ot R$, and then use (i). Then we have 
\[
H= S(R) \ot (L\lboson \k G) = S(R) \ot \k G \ot L.
\]
This includes $S(R)\ot \k G \, (=\k G \rboson R)$ as a Hopf subalgebra, which is tensorially 
presented as $R \otimes \k G$ by (ii). The desired result now follows. 

(iii)\ One can argue just as above. 
\end{proof}

\begin{rem}\label{rem:infinite_case}
Suppose that the base field $\k$ is arbitrary, and remove from $G$ (resp., $V$, $W$, $L$ and $R$) the
assumption that it is finite (resp., they are finite-dimensional). 
Then Propositions \ref{prop:def_of_H} and \ref{prop:permutableH} remain to hold since
the proofs above are valid. By saying 
in (i), for example, of Proposition \ref{prop:permutableH} that
$L$ and $\k G$ permutable, we may mean that $\k G \ot L$ is mapped isomorphically onto $L(\k G)$
by the product map. 
\end{rem}

Let us return to the situation posed at the beginning of this subsection. 
Recall that $L$ and $R$ are $\mathbb{N}$-graded Hopf-algebra objects, which
have the counits as augmentations.  
The augmentation ideals $L^+=\bigoplus_{n>0}L(n)$ and 
$R^+=\bigoplus_{n>0}R(n)$ are seen to be $G$-stable, being obviously nilpotent. 
Let
\[
A := L \lboson \k G,\quad B:= \k G \rboson R 
\]
in $H$; see \eqref{eq:tensor_pres_H}.
Then Conditions (I) and (II) in Section \ref{subsec:pattern} are satisfied. Therefore,
\[
I_A:= L^+\ot \k G,\quad I_B:=\k G \ot R^+
\]
are the Jacobson radicals of $A$ and of $B$, respectively, so that
\[
A/I_A = \k G = B/I_B. 
\]
In the present situation Theorem \ref{thm:pattern} reads:

\begin{theorem}\label{thm:Hsimple}
For every $\lambda \in \G$, 
let $\k_B(\lambda)$ denote the one-dimensional left $B$-module given by the algebra map 
\[
B \overset{\mathrm{proj}}{\longrightarrow}  B/I_B =\k G \overset{\k \lambda}{\longrightarrow} \k.
\]  
\begin{itemize}
\item[(1)] The left $H$-module
\[
\mathscr{M}(\lambda)=H \ot_B \k_B(\lambda)
\]
has a unique simple quotient $H$-module, which we denote by $\mathscr{L}(\lambda)$.
\item[(2)]
$\lambda \mapsto \sL(\lambda)$ gives a bijection $\G \isomto H\text{-}\mathsf{Simple}$.   
\end{itemize}
\end{theorem}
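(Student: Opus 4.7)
The plan is to deduce Theorem \ref{thm:Hsimple} directly from the abstract pattern established in Theorem \ref{thm:pattern}, applied to $\Lambda = H$ with subalgebras $L$, $\k G$, $R$. Almost all of the preparation has already been done: the tensorial presentation \eqref{eq:tensor_pres_H} from Proposition \ref{prop:def_of_H}(i) is exactly the decomposition \eqref{eq:TRD} required at the start of Section \ref{subsec:pattern}, and Proposition \ref{prop:permutableH}(i),(ii) provides the permutability of $L$ with $\k G$ and of $\k G$ with $R$, so that $A = L \ot \k G$ and $B = \k G \ot R$ are indeed subalgebras of $H$ (coinciding with the bosonizations $L \lboson \k G$ and $\k G \rboson R$ by Proposition \ref{prop:def_of_H}(ii)).

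So the substantive task reduces to verifying Conditions (I) and (II) of Section \ref{subsec:pattern}. For (I), I would take the augmentations on $L$ and $R$ to be their counits; then $L^+ = \bigoplus_{n \ge 1} L(n)$ and $R^+ = \bigoplus_{n \ge 1} R(n)$ are homogeneous ideals of positive degree in finite-dimensional $\mathbb{N}$-graded algebras, hence nilpotent. For (II), the key observation is that each homogeneous component $L(n)$ is a sub-object of the Yetter--Drinfel'd module $L$, hence $G$-stable, so $L^+$ is $G$-stable as well. Inside $A = L \lboson \k G$ this yields $g\, L^+ = (g \tr L^+)\, g \subseteq L^+ (\k G)$ for every $g \in G$, whence $(\k G)L^+ \subseteq L^+(\k G)$; equality follows by the dimension-counting remark recorded in Section \ref{subsec:pattern}. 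The identity $R^+ (\k G) = (\k G) R^+$ in $B$ is obtained symmetrically, using the right-sided smash structure of $\k G \rboson R$ and the $G$-stability of $R^+$.

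Once Conditions (I) and (II) are in force, the conclusions of Section \ref{subsec:pattern} apply verbatim: $I_A = L^+ \ot \k G$ and $I_B = \k G \ot R^+$ are the Jacobson radicals of $A$ and $B$, the quotients $A/I_A$ and $B/I_B$ both identify with $\k G$, and the one-dimensional modules $\k_B(\lambda)$ over $B$ indexed by $\lambda \in \G$ exhaust the simple $B$-modules. Part (1) is then immediate from Proposition \ref{prop:Llambda} applied to $\sM(\lambda) = H \ot_B \k_B(\lambda)$, and part (2) is exactly the bijection furnished by Theorem \ref{thm:pattern}.

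There is no genuine obstacle: the theorem is a direct specialization of the abstract result, and the only nontrivial bookkeeping is the $G$-stability of $L^+$ and $R^+$, which is read off immediately from the compatibility between the $\mathbb{N}$-grading and the Yetter--Drinfel'd structure. If a subtlety arises at all, it is in being careful that the permutability of $\k G$ with $L^+$ and $R^+$ is asserted at the level of equalities of subspaces of $A$ and $B$ (as opposed to merely containments), but this is settled by the dimension argument already invoked in Section \ref{subsec:pattern}.
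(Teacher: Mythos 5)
Your proposal is correct and follows essentially the same route as the paper, which likewise presents Theorem \ref{thm:Hsimple} as a direct specialization of Theorem \ref{thm:pattern} after noting that $L^+$ and $R^+$ are $G$-stable and nilpotent so that Conditions (I) and (II) hold. Your explicit verification of (II) via the smash-product relation $g\,L^+=(g\tr L^+)\,g\subseteq L^+(\k G)$ and the dimension-counting remark merely spells out what the paper leaves implicit.
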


\section{Simple modules over Drinfel'd doubles}\label{sec:double}

Retain $H$ to be the Hopf algebra given by Proposition \ref{prop:def_of_H}.

\subsection{}\label{subsec:Hdual}
Recall from \eqref{eq:tensor_pres_H} the tensorial presentation $H=L \ot \k G \ot R$ of $H$. 

\begin{lemma}\label{lem:H_struc}
We have the following.
\begin{itemize}
\item[(1)] $\ep \ot \mathrm{id} \ot \ep : H=L \ot \k G \ot R\to \k G$ is a coalgebra
map, by which $\k G$ turns into a quotient coalgebra of $H$. 
\item[(2)] Regard $L$ and $R$ as coalgebras with respect to the original structures as
coalgebra objects in ${}_G^G\YD$ and in $\YD_G^G$, respectively. Then
\[
L \overset{\mathrm{id} \ot \ep \ot \ep}{\longleftarrow\hspace{-1.6mm}-} H=L \ot \k G \ot R
\overset{\ep \ot \ep \ot \mathrm{id}}{-\hspace{-1.6mm}\longrightarrow} R 
\]
are coalgebra maps, by which $L$ and $R$ turn into quotient left and respectively, right
$H$-module coalgebras of $H$; see Remark \ref{rem:coideal_subalgebra}. 
\item[(3)] The iterated coproduct $(\mathrm{id}\ot \Delta)\circ \Delta : H \to H\ot H \ot H$, composed
with the tensor product of the three quotient maps $H \ot H \ot H \to L \ot \k G \ot R$,
gives an inverse of the isomorphic product map $L \ot \k G \ot R \isomto H$. 
\end{itemize}
\end{lemma}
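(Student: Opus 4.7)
The plan is to reduce everything to the bosonization structure of the two Hopf subalgebras $A=L\lboson \k G$ and $B=\k G\rboson R$ supplied by Proposition~\ref{prop:def_of_H}(ii). Each carries its canonical Hopf-algebra projection onto $\k G$ (recall \eqref{eq:triple}),
\[
\pi_A\colon A\to \k G,\ \ xf\mapsto \ep(x)f,\qquad \pi_B\colon B\to \k G,\ \ fy\mapsto f\ep(y),
\]
together with the complementary projection
\[
p_L:=\mathrm{id}_L\ot\ep\colon A\to L,\qquad p_R:=\ep\ot\mathrm{id}_R\colon B\to R,
\]
which a short computation from the smash-coproduct formula will show to be a coalgebra map (though not an algebra map). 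I will additionally use that $(p_L\ot \pi_A)\Delta_A$ and $(\pi_B\ot p_R)\Delta_B$ reproduce the canonical tensor presentations $A\simeq L\ot \k G$ and $B\simeq \k G\ot R$---the standard bosonization splittings.

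For part (1), the point is that under the identification $H\simeq A\ot_{\k G}B$ afforded by the triangular decomposition, the map $p:=\ep\ot\mathrm{id}\ot\ep$ agrees with $a\ot_{\k G}b\mapsto \pi_A(a)\pi_B(b)$, which is well-defined because $\pi_A|_{\k G}=\mathrm{id}=\pi_B|_{\k G}$. The coalgebra-map identity then reduces via the coalgebra property of $\pi_A,\pi_B$ to
\[
(p\ot p)\Delta(ab)=\sum \pi_A(a_{(1)})\pi_B(b_{(1)})\ot \pi_A(a_{(2)})\pi_B(b_{(2)})=\Delta(\pi_A(a)\pi_B(b))=\Delta(p(ab)).
\]
The same mechanism will handle (2): $\mathrm{id}\ot\ep\ot\ep$ becomes $a\ot_{\k G}b\mapsto p_L(a)\ep(b)$, a coalgebra map because $p_L$ is. For the left $H$-module-coalgebra structure it remains to see that $\ker(\mathrm{id}\ot\ep\ot\ep)=L\cdot B^+$ is a left $H$-ideal. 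Using $H=L\cdot B$ and the permutability $L\cdot B=B\cdot L$ of Proposition~\ref{prop:permutableH}(iii),
\[
H\cdot L\cdot B^+=L\cdot B\cdot L\cdot B^+=L\cdot L\cdot B\cdot B^+\subseteq L\cdot B^+
\]
since $B^+$ is an ideal of $B$. The symmetric argument via Proposition~\ref{prop:permutableH}(iv) will handle $\ep\ot\ep\ot\mathrm{id}\colon H\to R$.

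For (3), since the product map $m\colon L\ot\k G\ot R\to H$ is an isomorphism of equal-dimensional vector spaces it will suffice to check $F\circ m=\mathrm{id}$. Writing a representative $xgy=ab$ with $a=xg\in A$, $b=y\in B$ and expanding $\Delta^{(2)}(ab)=\Delta^{(2)}(a)\Delta^{(2)}(b)$ in $H^{\ot 3}$, I will apply the three projections using the multiplicative factorisations already derived in (1), (2) and its right-sided analogue. The single $\ep$ appearing in each of the two outer slots should collapse one tier of the Sweedler expansion on $a$ (resp.\ on $b$) via the counit identity, decoupling the two sums and yielding
\[
F(xgy)=\bigl[(p_L\ot\pi_A)\Delta_A(xg)\bigr]\ot_{\k G}\bigl[(\pi_B\ot p_R)\Delta_B(y)\bigr]=(x\ot g)\ot_{\k G}(1\ot y)=x\ot g\ot y
\]
by the bosonization splittings. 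The main obstacle I anticipate is the Sweedler bookkeeping in (3): the three Sweedler sums couple through the middle $\k G$-valued tensor factor, and one must verify that the three applications of the counit axiom collapse this coupling cleanly into the two independent bosonization splittings.
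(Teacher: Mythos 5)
Your proposal is correct and amounts to a careful write‑up of exactly what the paper leaves to the reader: the paper's proof declares parts (1) and (2) "easy to see" and disposes of part (3) by the same direct computation you carry out, namely that $xgy\mapsto x\ot g\ot y$ under the prescribed composite. Your organizational device — identifying $H\simeq A\ot_{\k G}B$ and factoring the three projections through $\pi_A,\pi_B,p_L,p_R$ — is a clean way to do the Sweedler bookkeeping, and the permutability argument for the (co)ideal property of the kernels is sound.
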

\begin{proof}
Parts 1 and 2 are easy to see. Part 3 follows since one sees 
by a direct computation
that if $x \in L$, $g \in G$
and $y \in R$, the product $xgy$ is sent to $x\ot g \ot y$ by the prescribed composite. 
\end{proof}

Let $H^*$ denote the dual Hopf algebra of $H$. 
To present the coproduct of $H$ or of $H^*$,
we will use the following variant of the Heyneman-Sweedler notation \cite[Notation 1.4.2, p.6]{Mon}
\[
\Delta(h) = h_{(1)} \ot h_{(2)},\quad (\mathrm{id}\ot \Delta)\circ \Delta(h) = h_{(1)}\ot h_{(2)} \ot h_{(3)}. 
\]
We let 
$\l \ , \ \r : H^* \times H \to \k, \ \l \a, h \r =\a(h)$ denote the canonical pairing. 
As a vector space, $H^*$ is canonically identified so as
\begin{equation}\label{eq:Hdual_present}
H^*=L^* \ot \k \G \ot R^*.
\end{equation}
 
By duality (see Remark \ref{rem:coideal_subalgebra}), 
Lemma \ref{lem:H_struc} yields the following:

\begin{corollary}\label{cor:Hdual_struc}
In $H^*$, $\k \G$ is a subalgebra, $\L$ is a left coideal subalgebra, and 
$\R$ is a right coideal subalgebra. Moreover, Eq.~\eqref{eq:Hdual_present} gives 
a tensorial presentation of the algebra $H^*$. 
\end{corollary}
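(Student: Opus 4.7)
The plan is to deduce the corollary from Lemma~\ref{lem:H_struc} entirely by finite-dimensional linear duality, using the correspondences recalled in Remark~\ref{rem:coideal_subalgebra}: under transposition, a coalgebra surjection $H \to C$ becomes an algebra inclusion $C^* \hookrightarrow H^*$, while a quotient left (resp.\ right) $H$-module coalgebra of $H$ dualizes to a left (resp.\ right) coideal subalgebra of $H^*$. Since all objects are finite-dimensional, no compactness or topological issues arise.

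Applying these correspondences clause-by-clause to the lemma immediately yields the first three assertions. Part~(1) turns the coalgebra surjection $\ep \ot \mathrm{id} \ot \ep : H \to \k G$ into an algebra inclusion $\k\G = (\k G)^* \hookrightarrow H^*$, exhibiting $\k\G$ as a subalgebra. Part~(2) turns the quotient left $H$-module coalgebra $\mathrm{id}\ot\ep\ot\ep : H \to L$ into the transposed inclusion $\L \hookrightarrow H^*$ of a left coideal subalgebra, and symmetrically gives $\R \hookrightarrow H^*$ as a right coideal subalgebra.

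For the tensorial presentation, I would dualize part~(3). Write $\phi : H \to L \ot \k G \ot R$ for the composite of the iterated coproduct $(\mathrm{id} \ot \Delta)\circ \Delta$ with the tensor of the three projections described in the lemma. Part~(3) asserts that $\phi$ is the inverse of the product map, hence an isomorphism, so its transpose $\phi^* : \L \ot \k\G \ot \R \to H^*$ is also an isomorphism. The dual of the iterated coproduct on $H$ is the iterated multiplication on $H^*$, and the dual of the tensor of the three projections is the tensor of the three subalgebra inclusions already produced. Thus $\phi^*$ is nothing other than the restriction to $\L \ot \k\G \ot \R$ of the multiplication map of $H^*$; its bijectivity gives the desired tensorial presentation \eqref{eq:Hdual_present} in the sense of Section~\ref{subsec:tensor_product}.

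No genuine computation is needed; the only conceptual point to check is the bookkeeping that identifies the three transposed injections appearing in $\phi^*$ with the three subalgebras of $H^*$ produced in the second paragraph, so that $\phi^*$ is literally the multiplication of $H^*$ restricted to that tensor product. Once this identification is in place, the corollary follows at once.
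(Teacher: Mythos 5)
Your proposal is correct and coincides with the paper's own argument: the corollary is stated there with no proof beyond the phrase ``By duality (see Remark~\ref{rem:coideal_subalgebra}), Lemma~\ref{lem:H_struc} yields the following,'' and your clause-by-clause transposition (including the identification of the dual of the iterated coproduct with the iterated multiplication of $H^*$ for the tensorial presentation) is exactly the intended dualization, just spelled out explicitly.
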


\begin{lemma}\label{lem:dual_Hopf_object}
We have the following.
\begin{itemize}
\item[(1)]
$\L=\bigoplus_{n\ge 0}L(n)^*$ and $\R=\bigoplus_{n\ge 0}R(n)^*$ are naturally 
$\mathbb{N}$-graded Hopf-algebra objects in ${}^{\G}_{\G}\YD$ and in $\YD_{\G}^{\G}$, respectively,
such that $\L(0)=\k =\R(0)$. 
\item[(2)]
If $\mathrm{char}\, \k = 0$, then $\L$ and $\R$ are both Nichols algebras.
\item[(3)]
If $\mathrm{char}\, \k > 0$, then $\L$ is not necessarily generated by 
$\L(1)=V^*$; $\L$ is generated 
by $\L(1)$, if and only if $\L$ is Nichols, if and only if $L$ is Nichols. 
The same holds for $\R$.
\end{itemize}
\end{lemma}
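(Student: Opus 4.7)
The plan has three parts, addressing the three clauses of the lemma in turn.

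\emph{Part (1).} The strategy is to apply Radford's biproduct theorem (as recalled after \eqref{eq:triple}) to the dual Hopf algebra $A^*:=(L \lboson \k G)^*$. Since $A:=L \lboson \k G$ is a finite-dimensional Hopf subalgebra of $H$ by Proposition \ref{prop:def_of_H}(ii), the pair $\iota:\k G \hookrightarrow A$, $\pi:A \twoheadrightarrow \k G$ from \eqref{eq:triple} dualizes to a pair $\pi^*:\k\G \hookrightarrow A^*$, $\iota^*:A^* \twoheadrightarrow \k\G$ with $\iota^*\pi^*=\mathrm{id}$, so that $A^*$ is the bosonization of the Hopf-algebra object in ${}^{\G}_{\G}\YD$ formed by the $\iota^*$-coinvariants. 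A short computation shows that $\alpha \in A^*$ is $\iota^*$-coinvariant iff it annihilates $A \cdot \iota(\ker \ep_{\k G})$, equivalently, iff $\alpha(x \ot f)$ depends only on $x \in L$ in the tensorial presentation $A=L \ot \k G$; hence the coinvariant subspace is canonically $\L$. The grading $A(n)=L(n)\ot \k G$ induces an $\mathbb{N}$-grading on $A^*$ with $A^*(0)=\k\G$, which restricts to $\L(n)=L(n)^*$ with $\L(0)=\k$. Applying the same argument to $B:=\k G \rboson R$ yields $\R$ as an $\mathbb{N}$-graded Hopf-algebra object in $\YD^{\G}_{\G}$ with $\R(0)=\k$.

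\emph{Part (3).} I will first prove the three equivalences in arbitrary characteristic, then exhibit a non-Nichols pre-Nichols $L$ (from Example \ref{ex:two_examples}(1), taking some $m_i=p^{e_i}\mathrm{ord}(\chi_i(f_i))$ with $e_i>0$) to justify the first clause. The main tool is the standard duality principle: for a finite-dimensional connected $\mathbb{N}$-graded Hopf-algebra object $M$, an element of $M(n)^*$ with $n \ge 1$ is primitive in $M^*$ iff it annihilates $M^+ \cdot M^+ \cap M(n)$, where $M^+=\bigoplus_{m \ge 1} M(m)$; hence $P(M^*)=M^*(1)$ iff $M$ is generated by $M(1)$. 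Applied to $L$ (pre-Nichols, hence generated in degree $1$), this yields $P(\L)=V^*$. Applied to $\L$ (with $(\L)^* \cong L$), it gives: $\L$ is generated in degree $1$ iff $P(L)=V$, iff $L$ is Nichols (since $L$ is already pre-Nichols). Combining, $\L$ is Nichols iff $\L$ is generated in degree $1$ iff $L$ is Nichols. The argument for $\R$ is the same.

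\emph{Part (2).} In characteristic zero, Angiono's Theorem \ref{thm:in_char_zero} forces $L$ and $R$ to be Nichols; part (3) then gives that $\L$ and $\R$ are Nichols as well.

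The principal technical step is the identification in part (1) of the Radford coinvariants of $A^*$ (and of $B^*$) with the linear dual of $L$ (resp.\ $R$), together with the verification that the $\mathbb{N}$-grading and the Hopf-algebra-object structure transport correctly to the dual Yetter-Drinfel'd category; once this is in place, the duality principle in part (3) is elementary, and part (2) is a direct consequence of Angiono's theorem.
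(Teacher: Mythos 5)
Your proposal is correct, but it reaches part (1) by a different route than the paper. The paper's proof of (1) is direct and stays entirely inside the braided categories: it identifies the $G$-grading on $L$ (resp.\ $R$) with a left (resp.\ right) $\k G$-comodule structure and simply dualizes all structure maps of the Hopf-algebra object, obtaining $\L$ and $\R$ with their $\G$-Yetter--Drinfel'd structures in one stroke. You instead pass to the bosonization: you dualize the triple $(L\lboson\k G,\iota,\pi)$ of \eqref{eq:triple} to a triple over $\k\G$, invoke the Radford--Majid correspondence to recover a Hopf-algebra object in ${}^{\G}_{\G}\YD$ as the coinvariants, and identify those coinvariants with $L^*$ as the annihilator of $L\ot(\k G)^+$. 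This is legitimate (the correspondence cited after \eqref{eq:triple} applies to any such triple over $\k\G$) and has the advantage of reusing machinery already set up, at the cost of an extra identification step and of being slightly less transparent about how the $\G$-grading and $\G$-action on $\L$ arise; the paper's direct dualization gives these explicitly. For part (3) your argument is in substance identical to the paper's: the paper compresses the three equivalences into the single statement that each is equivalent to $L$ being strictly graded in Sweedler's sense (i.e.\ $P(L)=L(1)$), citing \cite[Sect.~11.2]{Sw}, whereas you prove the underlying duality principle (primitives of $M^*$ in degree $n\ge 1$ are the annihilator of $M^+M^+\cap M(n)$, hence $P(M^*)=M^*(1)$ iff $M$ is generated in degree one) by hand; your explicit counterexample from Example \ref{ex:two_examples}(1) in characteristic $p$ is exactly what justifies the ``not necessarily'' clause. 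Part (2) is the same in both: Angiono's Theorem \ref{thm:in_char_zero} forces $L$ and $R$ to be Nichols, and the equivalences of part (3) (valid in any characteristic) transfer this to $\L$ and $\R$.
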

\begin{proof}
(1)\ 
To see this, identify the $G$-gradings on $L$ and on $R$ 
with left and, respectively, right $\k G$-comodule structures
\[ L \to \k G \ot L, \quad R \to R \ot \k G, \]
and dualize all the structure maps; the $\k G$-comodule structure, for example, on $L$ sends every homogeneous
element $x \in L_g$, $g \in G$, to $g \ot x$. 

(2)\ This follows from Theorem \ref{thm:in_char_zero}, by duality.

(3)\ The three conditions all are equivalent to that $L$ is strictly graded \cite[Sect. 11.2]{Sw} as an $\mathbb{N}$-graded
coalgebra, or in other words, $L(1)$ coincides with the space of all primitives in $L$. 
\end{proof}

From the Hopf-algebra objects above there arise the algebras
\begin{equation}\label{eq:smash_prod}
\L \rtimes \k \G,\quad \k \G \ltimes \R
\end{equation}
of smash product, in particular.

\begin{prop}\label{prop:Hdual_suppl}
For \ $\L \ot \k \G$, \ $\k \G \ot \R$ \ 
and \ $\k \G$ \ 
in \ 
$H^*=L^* \ot \k \G \ot R^*$, we have the following.
\begin{itemize}
\item[(1)] $\L \ot \k \G$ is a left coideal subalgebra of $H^*$, which is $\L \rtimes \k \G$
as an algebra. 
\item[(2)] $\k \G \ot \R$ is a right coideal subalgebra of $H^*$, which is $\k \G \ltimes \R$
as an algebra. 
\item[(3)] We have
\begin{equation}\label{eq:coprod_Gdual}
\Delta(\k \G)\subset (\k \G \ot \R)\ot (\L \ot \k \G). 
\end{equation}
\end{itemize}
\end{prop}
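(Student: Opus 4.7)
The plan is to prove all three parts by duality, identifying certain one-sided ideals and coideals of $H$ whose orthogonal complements in $H^*$ give the desired coideal subalgebras. The key inputs are the Hopf subalgebra structures $L \lboson \k G \subset H$ and $\k G \rboson R \subset H$ together with the tensorial decomposition $H = L \ot \k G \ot R$, which supply vector-space projections onto these Hopf subalgebras.

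For part (1) I consider the projection $\pi_1 = \mathrm{id} \ot \mathrm{id} \ot \ep : H \to L \lboson \k G$. First I verify that $\pi_1$ is a coalgebra map: writing $\Delta_H(xgy) = \Delta(x)\Delta(g)\Delta(y)$, the factors $\Delta(x)$ and $\Delta(g)$ lie in $L \lboson \k G \ot L \lboson \k G$ (since $L \lboson \k G$ is a Hopf subalgebra), and expanding $\Delta(y)$ via the smash coproduct of $\k G \rboson R$ one checks that applying $\pi_1 \ot \pi_1$ collapses the result to $\Delta_{L \lboson \k G}(xg)\ep(y)$, because $\ep$ annihilates the homogeneous components $R_f$ for $f \neq 1$. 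Next, $\ker \pi_1 = L \ot \k G \ot R^+$ coincides with $H \cdot R^+$ as a subspace of $H$ (since $(xgy)\cdot r = xg(yr) \in L \ot \k G \ot R^+$ whenever $r \in R^+$), and hence is a left ideal of $H$. Thus $\ker \pi_1$ is a coideal left ideal, and dually the embedding $(L \lboson \k G)^* \hookrightarrow H^*$, $\beta \mapsto \beta \comp \pi_1$, exhibits its image $\L \ot \k\G$ as a left coideal subalgebra of $H^*$. Its algebra structure coincides with the smash product $\L \rtimes \k\G$ because the coalgebra structure on $L \lboson \k G$ is the smash coproduct built from the coproduct of $L$ and the $G$-grading, whose dual algebra is the smash product of the Hopf-algebra object $\L \in {}^{\G}_{\G}\YD$ of Lemma \ref{lem:dual_Hopf_object} with $\k\G$.

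Part (2) follows by the mirror-image argument, using $\pi_2 = \ep \ot \mathrm{id} \ot \mathrm{id} : H \to \k G \rboson R$ whose kernel $L^+ \ot \k G \ot R = L^+ \cdot H$ is a right ideal and a coideal. For part (3) I combine (1) and (2): the element $\phi \in \k\G$, embedded in $H^*$ as $\ep \ot \phi \ot \ep$, lies simultaneously in $\L \ot \k\G$ and in $\k\G \ot \R$. The left-coideal property from (1) gives $\Delta_{H^*}(\phi) \in H^* \ot (\L \ot \k\G)$, while the right-coideal property from (2) gives $\Delta_{H^*}(\phi) \in (\k\G \ot \R) \ot H^*$; intersecting these inside $H^* \ot H^*$ via the elementary identity $(H^* \ot J_1) \cap (J_2 \ot H^*) = J_2 \ot J_1$ for subspaces $J_1, J_2 \subset H^*$ yields the required inclusion. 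The principal technical obstacle is the coalgebra-map verification for $\pi_1$ and $\pi_2$, which requires careful bookkeeping with the smash (co)product formulas and the interaction between $\ep$ and the $G$-grading on $L$ and $R$.
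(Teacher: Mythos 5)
Your proof is correct and follows essentially the same route as the paper: Parts (1) and (2) are obtained by dualizing the quotient left (resp.\ right) $H$-module coalgebra $H/HR^+ = L \lcosmash \k G$ (resp.\ $H/L^+H = \k G \rcosmash R$), and Part (3) by combining the two coideal conditions. You merely spell out the details (the coalgebra-map check for $\pi_1$, the identification $\ker\pi_1 = HR^+$, and the intersection identity for Part (3)) that the paper's one-line proof leaves implicit.
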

\begin{proof}
For Part 1, dualize the fact that $L\otimes \k G=H/HR^+$ is a quotient left $H$-module coalgebra
of $H$, which is the smash coproduct $L \lcosmash \k G$ as a coalgebra. 
For Part 2, argue similarly. 
Part 3 follows from 1 and 2. 
\end{proof}

\begin{corollary}\label{cor:permutable1}
$L^*$ and $\k \G$, as well as $\k \G$ and $\R$, are permutable in $H^*$.
\end{corollary}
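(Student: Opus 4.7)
The plan is to read off both permutabilities directly from the smash-product identifications in Proposition \ref{prop:Hdual_suppl}(1)--(2), using the observation that the elements of $\G$ are group-like in $\k \G$, which collapses the smash-product cross relations to simple conjugation-type identities involving the Yetter-Drinfel'd action.

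For the first claim, Proposition \ref{prop:Hdual_suppl}(1) identifies the subspace $\L \ot \k \G$ of $H^*$ with the smash product $\L \rtimes \k \G$. Since $\alpha \in \G$ has coproduct $\alpha \ot \alpha$, the defining relation of the smash product specializes, for $\xi \in \L$, to $\alpha \xi = (\alpha \tr \xi)\, \alpha$ in $H^*$. This exhibits every element of $\k \G \cdot \L$ as lying in $\L \cdot \k \G$; rewriting the same identity as $\xi \alpha = \alpha\, (\alpha^{-1} \tr \xi)$ supplies the reverse inclusion, and the two subsets coincide.

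The second claim is entirely symmetric: using Proposition \ref{prop:Hdual_suppl}(2), which identifies $\k \G \ot \R$ with the right smash product $\k \G \ltimes \R$, the analogous group-like computation with the right action $\eta \tl \alpha$ for $\eta \in \R$ produces both inclusions $\k \G \cdot \R \subseteq \R \cdot \k \G$ and $\R \cdot \k \G \subseteq \k \G \cdot \R$.

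There is essentially no substantive obstacle: the heavy lifting---identifying the relevant subspaces of $H^*$ as smash-product subalgebras---was already accomplished in Proposition \ref{prop:Hdual_suppl}, and the corollary is a formal consequence of that structure together with the fact that $\G$ consists of group-like elements. The only point requiring attention is keeping the left/right Yetter-Drinfel'd conventions straight when invoking the respective actions on $\L$ and on $\R$.
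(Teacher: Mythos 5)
Your argument is correct and matches the paper's (implicit) reasoning: the corollary is stated there without proof precisely because it follows immediately from Proposition \ref{prop:Hdual_suppl}(1)--(2) via the smash-product cross relation $\alpha\xi=(\alpha\tr\xi)\,\alpha$ for grouplike $\alpha\in\G$, with the reverse inclusion obtained by invertibility of the $\G$-action (or, equivalently, by the dimension count noted in Section \ref{subsec:pattern}).
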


\subsection{}\label{subsec:double}
Let $(H^*)^{\mathrm{cop}}$ denote the Hopf algebra obtained from $H^*$ by opposing the coproduct.
The Drinfel'd double $D(H)$ of $H$ is the Hopf algebra determined by the
properties: (i)~~It includes $(H^*)^{\mathrm{cop}}$ and $H$ as Hopf algebras, and is tensorially presented as
\[
D(H) = (H^*)^{\mathrm{cop}}\ot H.
\]
(ii)~~The product is determined by 
\begin{equation}\label{eq:formula1}
h\a =
\l \as, \hi \r \, \an \hn \, \l \ai, \Sm (\hs) \r,\quad h \in H,\ \a \in H^*
\end{equation}
or equivalently, by
\begin{equation}\label{eq:formula2}
\a h =
\l \as, 
\Sm(\hi) \r \, \hn \an \, \l \ai, \hs \r,\quad \a \in H^*,\ h \in H. 
\end{equation}
See \cite[Sect.~10.3]{Mon} or \cite[Chap.~IX, Sect.~4]{Ka}. 
Here $S^{-1}$ denotes the inverse of the antipode $S$ of $H$, which is indeed bijective. 
We emphasize that $\Delta(\a)=\ai \ot \an$ represents the coproduct of $H^*$, not of $(H^*)^{\mathrm{cop}}$.

By using the antipode one sees that the subalgebras $H^*$ and $H$ are permutable in $D(H)$. 
Obviously, we have the tensorial presentation of $D(H)$:
\begin{equation}\label{eq:D(H)_present}
D(H) = \L \ot \k \G \ot \R \ot L \ot \k G \ot R. 
\end{equation}

\begin{prop}\label{prop:inner_auto}
Given $g \in G$, the inner automorphism $x \mapsto gxg^{-1}$ on $D(H)$ stabilizes
each tensor factor on the right hand-side of \eqref{eq:D(H)_present}. The restricted
actions on $\k \G$ and $\k G$ are trivial, and those on $\L$ and $\R$ are given by
\begin{equation}\label{eq:G-actions}
\a \mapsto \l \a_{(1)}, g^{-1}\r\, \a_{(2)}\quad \text{and}\quad
\b \mapsto \b_{(1)}\, \l \b_{(2)}, g \r,
\end{equation}
respectively, where $\a \in \L$ and $\b \in \R$.  
\end{prop}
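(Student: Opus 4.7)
The plan is to start from formula (\ref{eq:formula1}) with $h=g$ to obtain an explicit expression for $g\a g^{-1}$ valid for every $\a \in H^*$, and then to invoke the coideal-subalgebra structures of Corollary \ref{cor:Hdual_struc} and Proposition \ref{prop:Hdual_suppl} in order to show that the action preserves each tensor factor, with the stated explicit formulas on $\L$ and $\R$.

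Since $g$ is grouplike, $\Delta(g)=g\ot g$ and $\Sm(g)=g^{-1}$, so (\ref{eq:formula1}) specializes to $g\a = \l\as,g\r\l\ai,g^{-1}\r\an g$, whence
\[
g\a g^{-1} = \l\as,g\r\l\ai,g^{-1}\r\an \in H^*
\]
for every $\a \in H^*$. This is the transpose of conjugation by $g^{-1}$ on $H$: pairing with $h \in H$ and iterating the duality $\l\a, h_1h_2\r = \l\ai,h_1\r\l\an,h_2\r$ gives $\l g\a g^{-1}, h\r = \l \a, g^{-1}hg\r$. Since $\mathrm{ad}_{g^{-1}}$ preserves $L$, $\k G$ and $R$ (Proposition \ref{prop:permutableH}), its transpose preserves the respective duals $\L$, $\k\G$, $\R$, establishing the first claim.

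For $\a \in \L$, Corollary \ref{cor:Hdual_struc} gives $\Delta(\a)\in H^*\ot \L$, so by coassociativity $\as \in \L$. The inclusion $\L \hookrightarrow H^*$ is dual to the projection $\mathrm{id}\ot\ep\ot\ep : H \to L$ of Lemma \ref{lem:H_struc}, so for any $\gamma\in\L$ one has $\l\gamma,g\r = \gamma(1_L) = \ep(\gamma)$; applied to $\as$ and combined with the counit identity $\an\,\ep(\as) = \an$ (a consequence of coassociativity), the displayed formula above collapses to
\[
g\a g^{-1} = \l\ai, g^{-1}\r \an \in \L.
\]
A symmetric argument, using that $\R$ is a right coideal subalgebra (hence $\b_{(1)}\in\R$ and $\l\b_{(1)},g^{-1}\r = \ep(\b_{(1)})$), yields $g\b g^{-1} = \b_{(1)}\,\l\b_{(2)},g\r \in \R$ for $\b\in\R$. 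Triviality on $\k G$ is immediate from $G$ being abelian, and triviality on $\k\G$ follows from the transpose reformulation, since $\mathrm{ad}_{g^{-1}}$ fixes $\k G$ pointwise.

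The main technical step I expect is the collapse of the three-pairing expression $\l\as,g\r\l\ai,g^{-1}\r\an$ to the two-pairing formulas in the statement; this rests precisely on the coideal-subalgebra condition, which guarantees that the outermost Sweedler factor of $\a\in\L$ (respectively, of $\b\in\R$) lies in $\L$ (respectively, in $\R$), where pairing with any $g \in G$ reduces to the counit $\ep$.
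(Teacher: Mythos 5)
Your proof is correct and follows essentially the same route as the paper: both specialize \eqref{eq:formula1} to the grouplike $h=g$ to see that conjugation by $g$ on $H^*$ is the transpose of $h\mapsto g^{-1}hg$ on $H$, and both use the coideal-subalgebra structure of $\L$ and $\R$ (equivalently, the quotient-coalgebra structure of $L$ and $R$ from Lemma \ref{lem:H_struc}) to obtain the formulas \eqref{eq:G-actions}. The only difference is cosmetic: you carry out the Sweedler-notation collapse $\l \as,g\r\l\ai,g^{-1}\r\an \rightsquigarrow \l\ai,g^{-1}\r\an$ explicitly, whereas the paper phrases the same step as the statement that $\mathrm{ad}_{g^{-1}}$ induces on the quotient coalgebras $L$ and $R$ the actions of $g^{-1}$ and $g$, respectively.
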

\begin{proof}
For $\k G$ this is obvious. For $L$ and $R$ this was shown in the proof of Proposition \ref{prop:permutableH}. 
One sees from \eqref{eq:formula1} that the inner automorphism 
stabilizes $H^*$. Moreover, the restricted action on $H^*$ is dualized to the inner automorphism 
$h \mapsto g^{-1}hg$ on $H$, which induces automorphisms on the quotient coalgebras
$L$, $\k G$ and $R$; see Lemma \ref{lem:H_struc}. The induced action on $\k G$ is trivial, while
that on $L$ (resp., on $R$) is the action by $g^{-1}\in H$ on the quotient left $H$-module $L$ of $H$ 
(resp., the action by $g\in H$ on the quotient right $H$-module $R$ of $H$). 
This proves the remaining. 
\end{proof}

\begin{corollary}\label{cor:permutable2}
We have the following.
\begin{itemize}
\item[(1)] $\k G$ and any of the tensor factors above are permutable. 
\item[(2)] $\k \G \ot \k G$ in $D(H)$ is a subalgebra, and is  
the group algebra $\k(\G \times G)$. 
\end{itemize}
\end{corollary}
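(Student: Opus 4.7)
The plan is to derive both parts directly from Proposition \ref{prop:inner_auto}, without any new computation in $D(H)$.

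For Part (1), I would argue as follows. Fix any one of the six tensor factors $X\in\{\sL,\k\G,\R,L,\k G,R\}$ on the right-hand side of \eqref{eq:D(H)_present}. Proposition \ref{prop:inner_auto} tells us that for every $g\in G$ the inner automorphism $x\mapsto gxg^{-1}$ stabilizes $X$, which immediately yields $gX=Xg$ as subsets of $D(H)$. Summing over $g$ gives $(\k G)X=X(\k G)$, so $\k G$ and $X$ are permutable in the sense of Section \ref{subsec:tensor_product}. Applying this to each $X$ separately gives (1).

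For Part (2), I would specialize the above to $X=\k\G$ and strengthen it. Proposition \ref{prop:inner_auto} says that the restricted $G$-action on $\k\G$ is \emph{trivial}, i.e.~$g\chi g^{-1}=\chi$ for all $\chi\in\G$, $g\in G$. Thus the elements of $\k G$ and of $\k\G$ commute element-wise in $D(H)$, so $(\k\G)(\k G)=(\k G)(\k\G)$, and this common product, which equals the image of the multiplication map $\k\G\ot\k G\to D(H)$, is a subalgebra. The tensorial presentation \eqref{eq:D(H)_present} guarantees that this multiplication map is injective, so the subalgebra is spanned by the linearly independent family $\{\chi g\mid \chi\in\G,\, g\in G\}$. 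Because $\chi$ and $g$ commute and each comes from an abelian group, $(\chi_1 g_1)(\chi_2 g_2)=(\chi_1\chi_2)(g_1g_2)$, so this family forms a multiplicative group isomorphic to $\G\times G$ via $(\chi,g)\mapsto\chi g$. Hence $\k\G\ot\k G\simeq \k(\G\times G)$ as algebras, proving (2).

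There is really no obstacle here; both parts are clean consequences of Proposition \ref{prop:inner_auto}. The only thing to keep an eye on is the direction of inclusion: stabilization under conjugation by \emph{both} $g$ and $g^{-1}$ is what upgrades $gX\subset Xg$ to equality, and this is automatic since $G$ is a group and Proposition \ref{prop:inner_auto} applies uniformly to all of its elements. Linear independence of $\{\chi g\}$ is guaranteed by the tensor presentation \eqref{eq:D(H)_present} and requires no further argument.
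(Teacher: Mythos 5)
Your proposal is correct and takes essentially the same route as the paper, which states the corollary without proof as an immediate consequence of Proposition \ref{prop:inner_auto}: stabilization of each tensor factor under conjugation by $G$ gives permutability, and triviality of the $G$-action on $\k \G$ gives the group algebra $\k(\G\times G)$. Your spelling-out of the injectivity of the product map $\k\G\ot\k G\to D(H)$ via the conventions of Section \ref{subsec:tensor_product} is exactly the justification the paper leaves implicit.
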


\begin{prop}\label{prop:permutableD}
In $D(H)$ the subalgebras in each of (i)--(vi) below are permutable.
\begin{itemize}
\item[(i)]~~$H^*$\ \, and\ \, $\k G \ot R$;\qquad $\mathrm{(ii)}$\ \, $H^*$\ \, and\ \, $L \ot \k G$;
\item[(iii)]~~$\L$\ \, and\ \, $R$;\hspace{16mm} $\mathrm{(iv)}$\ \, $\R$\ \, and\ \, $L$;
\item[(v)]~~$\L\ot \k \G$\ \, and\ \, $R$;\hspace{7mm} $\mathrm{(vi)}$\ \, $\k \G \ot \R$\ \, and\ \, $L$.
\end{itemize}
\end{prop}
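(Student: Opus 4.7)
The plan is to derive all six permutabilities from the commutation formulas \eqref{eq:formula1} and \eqref{eq:formula2}, together with the (co)ideal--subalgebra structures already established in Lemma~\ref{lem:H_struc}, Corollary~\ref{cor:Hdual_struc} and Proposition~\ref{prop:Hdual_suppl}. Dimension matching is automatic in every case: $D(H)$ admits both tensorial presentations $D(H)=H^*\ot H=\L\ot\k\G\ot\R\ot L\ot\k G\ot R$ and $D(H)=H\ot H^*=L\ot\k G\ot R\ot\L\ot\k\G\ot\R$, so each product $XY$ or $YX$ appearing in the six pairs has dimension $\dim X\cdot\dim Y$; it therefore suffices to establish one inclusion in each case.

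Parts (i) and (ii) are essentially immediate. Since $\k G\ot R$ and $L\ot\k G$ are \emph{Hopf} subalgebras of $H$, the iterated coproduct $(\mathrm{id}\ot\Delta)\Delta(h)$ stays in the triple tensor power whenever $h$ does. Consequently the middle Sweedler factor $h_{(2)}$ that appears on the right-hand side of \eqref{eq:formula1} (resp., \eqref{eq:formula2}) remains in the Hopf subalgebra, exhibiting $h\a$ as an element of $H^*(\k G\ot R)$ and $\a h$ as an element of $(\k G\ot R)H^*$. The same reasoning applied to $L\ot\k G$ gives (ii).

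For (iii) fix $\a\in\L$ and $y\in R$ and apply \eqref{eq:formula1}. Iterating the left-coideal-subalgebra inclusion $\Delta(\L)\subseteq H^*\ot\L$ forces $\as\in\L$, and iterating the right-coideal-subalgebra inclusion $\Delta(R)\subseteq R\ot H$ forces $y_{(1)}\in R$. The key observation is that $\L=L^*$ is embedded in $H^*$ as the pullback along the coalgebra projection $p_L=\mathrm{id}\ot\ep\ot\ep\colon H\to L$ of Lemma~\ref{lem:H_struc}(2), and this projection kills $R^+$; therefore
\[
\l \as,\, y_{(1)}\r = \ep(\as)\,\ep(y_{(1)}).
\]
Substituting into \eqref{eq:formula1} and contracting the outer Sweedler indices by the counit axioms collapses the expression to
\[
y\a \;=\; \sum \an\, y_{(1)}\, \l \ai,\, \Sm(y_{(2)})\r
\]
with $\an\in\L$ and $y_{(1)}\in R$, proving $R\cdot\L\subseteq\L\cdot R$. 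Part (iv) is the mirror statement: apply \eqref{eq:formula2} to $\b\in\R$ and $x\in L$, use that $\R=R^*$ is embedded via $p_R=\ep\ot\ep\ot\mathrm{id}$ and therefore kills $L^+$, and combine with the right-coideal structure of $\R\subseteq H^*$ and the left-coideal structure of $L\subseteq H$ to collapse \eqref{eq:formula2} into $\b x=\sum x_{(2)}\b_{(1)}\l\b_{(2)},\Sm(x_{(1)})\r\in L\cdot\R$.

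Parts (v) and (vi) follow from the very same template, after upgrading the coideal subalgebras to the ones in Proposition~\ref{prop:Hdual_suppl}. Namely, $\L\ot\k\G=(L\ot\k G)^*$ is the dual of the quotient coalgebra $H\twoheadrightarrow H/HR^+=L\ot\k G$, so it continues to annihilate $R^+$; the collapse of (iii) therefore applies verbatim with $\L$ replaced by $\L\ot\k\G$. Symmetrically, $\k\G\ot\R=(\k G\ot R)^*$ arises from $H\twoheadrightarrow H/L^+H$, annihilates $L^+$, and feeds into the (iv)-style argument to give (vi). The one place where genuine care is required—and which I expect to be the main obstacle—is exactly this identification of annihilators: matching each of $\L$, $\R$, $\L\ot\k\G$, $\k\G\ot\R$ with a specific quotient coalgebra of $H$ whose kernel contains the augmentation ideal of the appropriate partner factor. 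Once those matches are fixed the rest is purely formal manipulation of Sweedler indices.
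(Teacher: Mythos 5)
Your proof is correct and follows essentially the same route as the paper: straighten products with \eqref{eq:formula1}--\eqref{eq:formula2} and collapse one of the two pairings by observing that $\L$, $\R$, $\L\ot\k\G$ and $\k\G\ot\R$ annihilate the augmentation ideal of the relevant partner factor, while the coideal(-subalgebra) structures keep the surviving Sweedler components inside the correct subalgebras. The only real difference is that your dimension count reduces each case to a single inclusion, which lets you always pick the formula whose collapsing pairing does not involve $\Sm$ and thus avoid the paper's explicit verification of the first equality in \eqref{eq:aSm} (via $\Sm(h)=s^{-1}(h)\ot g^{-1}$); the paper instead derives both one-sided formulas \eqref{eq:ha} and \eqref{eq:ah}, which it reuses later in the proof of Proposition~\ref{prop:ideal}.
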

\begin{proof}
(i), (ii)\ One sees from the formulas \eqref{eq:formula1}, \eqref{eq:formula2} that
the Hopf subalgebras $H^*$ and $\k G \ot R$, as well as $H^*$ and $L \ot \k G$, 
are permutable.

(iv)\ Let $h \in L$ and $\a \in \R$. Recall that $L$ is a left coideal of $L\ot \k G\, (\subset H)$, and $\R$ 
is a right coideal of $H^*$. Since one then sees
\begin{equation}\label{eq:aSm}
\l \a, \Sm(h) \r = \ep(\a)\, \ep(h) = \l \a, h \r, 
\end{equation}
there result from \eqref{eq:formula1}, \eqref{eq:formula2} the formulas
\begin{equation}\label{eq:ha}
h \a = \l \an , \hi \r \, \ai \hn,
\end{equation}
\begin{equation}\label{eq:ah}
\a h = \l \an , \Sm(\hi) \r \, \hn \ai,
\end{equation}
which prove $h \a \in \R L$ and $\a h \in L\R$, respectively.

(vi)\ Let $h \in L$ and $\a \in \k \G \ot \R$. We claim that the last two formulas remain true; 
they prove the desired result. To prove the claim, it is enough to see that the the equalities in 
\eqref{eq:aSm} hold. The second equality obviously holds. We wish to prove the first one. 
We may suppose that the element $h$ of the (Hopf-algebra) object $L$ in ${}^G_G\YD$ is $G$-homogeneous,
being of degree $g\, (\in G)$, say. Since the counit on $L$ is $G$-graded, it follows that $\ep(h)=0$ unless $g=1$,
whence $\ep(h)g^{-1}=\ep(h)1$. 
Let $s : L \to L$ denote the (bijective) antipode of the Hopf-algebra object, and $s^{-1}$
its inverse. Since we see from $S(h)=g^{-1}s(h)$ that
\[
S^{-1}(h) = s^{-1}(h) \ot g^{-1} \in L\ot \k G,
\]
it follows that
$(\ep \ot \mathrm{id})(S^{-1}(h))=\ep(h)g^{-1}=\ep(h)1$, 
proving the desired equality. 

(iii), (v)\ \, One can argue just as for (iv), (vi). 
\end{proof}

\begin{lemma}\label{lem:tensor_pres_D}
We have the tensorial presentation
\[
D(H)= L^*\ot R \ot \k(\G \times G) \ot L \ot R^*
\]
of $D(H)$, in which $\L \ot R$ and $L \ot \R$ are both subalgebras
permutable with $\k (\hat{G} \times G)$.  
\end{lemma}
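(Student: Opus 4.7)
The plan is to verify the lemma's three claims in turn: that $\L \ot R$ and $L \ot \R$ are subalgebras, that each is permutable with $\k(\G \times G)$, and that the product map for the stated tensorial presentation is bijective.

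First I would dispatch the subalgebra and permutability assertions. That $\L \ot R$ and $L \ot \R$ are tensorially presented subalgebras follows from Proposition \ref{prop:permutableD}(iii) and (iv): the respective pairs are permutable in $D(H)$, and the product maps are injective because $\L, \R \subset H^*$ and $L, R \subset H$ lie in disjoint tensor factors of $D(H) = H^* \ot H$. For permutability with $\k(\G \times G) = \k\G \cdot \k G$ (a subalgebra by Corollary \ref{cor:permutable2}(2)), it suffices to verify that each of $\k\G, \k G$ permutes with each of $\L, R, L, \R$. Corollary \ref{cor:permutable2}(1) handles $\k G$; for $\k\G$, the pairings with $\L$ and $\R$ come from the smash-product structures in Proposition \ref{prop:Hdual_suppl}, while the pairings with $L$ and $R$ follow from specializing \eqref{eq:formula1} or \eqref{eq:formula2} to a group-like $\chi \in \k\G$, since $\chi$ restricts to an algebra map on $H$ vanishing on $L^+$ and $R^+$ and the triple pairings collapse accordingly to give $\chi h = (\text{scalar}) \cdot h \chi$ on homogeneous $h$.

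For the tensorial presentation itself, both source and target of the product map
\[
\phi \colon \L \ot R \ot \k(\G \times G) \ot L \ot \R \longrightarrow D(H)
\]
have dimension $(|G| \dim L \dim R)^2$, so bijectivity reduces to surjectivity. Let $X$ denote the image of $\phi$. Setting appropriate factors to $1$ shows that $X \supset H^* = \L \k\G \R$; and any $l g r \in H = L \k G R$ may be rewritten, using the alternative tensorial presentation $H = R \ot \k G \ot L$ that follows from the proof of Proposition \ref{prop:permutableH}(iv), as some $r' g' l'$, which lies in $X$ upon setting the $\L$- and $\R$-factors to $1$. Hence $X$ contains both $H^*$ and $H$, which together generate $D(H)$ as an algebra; it therefore suffices to prove $X$ is closed under multiplication. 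Using the permutability of $\k(\G \times G)$ with $\L R$ and with $L \R$, this reduces to checking $(L \R)(\L R) \subset X$.

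The hardest step will be this final closure verification, in which $\L$ and $L$ (resp.~$\R$ and $R$) fail to permute directly. I would use $\R \L \subset H^* = \L \k\G \R$ to decompose the interior, then invoke Proposition \ref{prop:permutableD}(i) and (ii)---which permute $H^*$ with the subalgebras $\k G \ot R$ and $L \ot \k G$ respectively---to handle the residual $L \cdot \L$ and $\R \cdot R$ terms via the Drinfel'd double product formula, after which the remaining swaps of grouplike factors into the central $\k(\G \times G)$ position are supplied by the permutabilities established in the first step. Combined with the dimension count, this completes the argument.
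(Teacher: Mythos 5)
Your reduction of the permutability claim to pairwise permutability of $\k\G$ and $\k G$ with each of $\L$, $R$, $L$, $\R$ contains a genuine error: $\k\G$ does \emph{not} permute with $L$ or with $R$ in $D(H)$ in general. The justification you offer---that a character $\chi\in\G$ ``restricts to an algebra map on $H$''---is false. The functional $xgy\mapsto\ep(x)\chi(g)\ep(y)$ is not multiplicative on $H$, precisely because of the cross relation \eqref{eq:wv}: one has $\chi(wv)=\delta(w,g\tr v)\,(\chi(f)-\chi(g))$, which is nonzero in general (e.g.\ for the quantum Frobenius Kernels, where $\delta(w_i,v_i)=c_i\ne0$ and $f_i=g_i^{-1}$), whereas $\chi(w)\chi(v)=0$. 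Equivalently, $\chi$ is not grouplike in $H^*$; Proposition \ref{prop:Hdual_suppl}(3) only gives $\Delta(\k\G)\subset(\k\G\ot\R)\ot(\L\ot\k\G)$, not $\Delta(\chi)=\chi\ot\chi$. Feeding the actual coproduct into \eqref{eq:formula2} one finds, for $w\in W_g\subset R$, that $\chi w$ equals a scalar multiple of $w\chi$ \emph{plus} the term $w\rightharpoonup\chi$, which lies in $(\L)^+\ot\k\G$ and is proportional to $\chi(f)-\chi(g)$; hence $\chi w\notin R\,\k\G+\k\G\,R$. (This is the double's analogue of $[E,F]\sim K-K^{-1}$: commuting $\G$ past $R$ creates elements of $\L$.) The same failure infects your final closure step, where the ``remaining swaps of grouplike factors into the central position'' again invoke these nonexistent permutabilities.

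The repair is the one the paper uses: only the enlarged coideal subalgebras permute, namely $\L\ot\k\G$ with $R$ and $\k\G\ot\R$ with $L$ (Proposition \ref{prop:permutableD}(v),(vi)), together with Corollaries \ref{cor:permutable1} and \ref{cor:permutable2}. The paper's proof is then a short chain of tensorial presentations,
$D(H)=H^*\ot R\ot\k G\ot L=R\ot\k G\ot H^*\ot L=\L\ot R\ot\k G\ot\k\G\ot L\ot\R$,
followed by
$\L\ot R\ot\k\G\ot\k G=R\ot(\L\ot\k\G)\ot\k G=\cdots=\k\G\ot\k G\ot\L\ot R$;
no dimension count or generation-plus-closure argument is needed, since each step is already a bijective product map. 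Your first-paragraph claims---that $\L\ot R$ and $L\ot\R$ are subalgebras by Proposition \ref{prop:permutableD}(iii),(iv), and that $\k G$ permutes with every tensor factor---are correct, but the argument as a whole does not go through without replacing the two false pairwise permutabilities by (v) and (vi).
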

\begin{proof}
Recall Corollary \ref{cor:permutable2}. The one sees from
Propositions \ref{prop:permutableH} and \ref{prop:permutableD} (i), (iii), (iv) that
\begin{align*}
D(H)&=H^*\ot R\ot \k G \ot L\\
&= R \ot \k G \ot H^* \ot L\\
&= L^* \ot R \ot \k G \ot \k \hat{G} \ot L \ot R^*,
\end{align*}
which gives the desired tensorial presentation. 
Again by Proposition \ref{prop:permutableD} (iii), (iv), 
$\L \ot R$ and $L \ot \R$ are both subalgebras.
By Proposition \ref{prop:permutableD} (v) and Corollary \ref{cor:permutable1} we have
\begin{align*} 
\L  \ot R \ot \k \hat{G} \ot \k G &= R \ot (\L \ot \k \hat{G}) \ot \k G\\
&= (\L \ot \k \hat{G})\ot R \ot \k G\\
&= \k \hat{G} \ot \L \ot R \ot \k G\\
&= \k \hat{G} \ot \k G \ot \L \ot R. 
\end{align*}
This shows that $\L  \ot R$ is permutable with $\k (\hat{G}\times G)$. 
One can argue similarly to prove that $L  \ot \R$ is as well.  
\end{proof}

We wish to see that $D(H)$, tensorially presented as above, is in the same situation as
$\Lambda=L\ot \k G \ot R$ in Section \ref{subsec:pattern}; we let $\hat{G}\times G$
play the role of the group $G$ in $\Lambda$. 

Lemma \ref{lem:tensor_pres_D} ensures that in $D(H)$,
\begin{equation}\label{eq:AB}
A:= \L \ot R \ot \k (\G \times G),\quad B:= \k(\G \times G)\ot L\ot \R. 
\end{equation}
define subalgebras which include the subalgebras $\L \ot R$ and $L \ot \R$, respectively,
permutable with the group algebra $\k (\G \times G)$.

Recall from Lemma \ref{lem:dual_Hopf_object} (1) that $\L$ and $\R$ are 
$\mathbb{N}$-graded Hopf-algebra objects, which have the counits as augmentations. 
Using the augmentation ideals $(\L)^+=\bigoplus_{n>0}L(n)^*$ and $(\R)^+=\bigoplus_{n>0}R(n)^*$, 
we define subspaces $I_A \subset A$ and $I_B \subset B$ by
\begin{align*}
I_A &:= (\L)^+ \ot R \ot \k(\G \times G) + \L \ot R^+ \ot \k(\G \times G),\\
I_B &:= \k(\G \times G) \ot L \ot (\R)^+ + \k(\G \times G) \ot L^+ \ot \R.
\end{align*}

\begin{prop}\label{prop:ideal}
$I_A$ and  $I_B$ are nilpotent ideals of $A$ and of $B$, respectively, such that
\[ A/I_A \simeq \k(\G \times G)\simeq B/I_B. \]
\end{prop}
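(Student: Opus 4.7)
My plan is to realize $I_A$ as the kernel of an explicit algebra map $\pi_A\colon A\to\k(\G\times G)$, and then to establish nilpotency via a filtration on the subalgebra $\L\ot R$ of $D(H)$. The case of $I_B$ will follow by an analogous argument, with the roles of $(L,\L)$ and $(R,\R)$ interchanged.

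First I define $\pi_A := \ep\ot\ep\ot\mathrm{id}_{\k(\G\times G)}$ on $A = \L\ot R\ot\k(\G\times G)$, using the counits of the Hopf-algebra objects $\L$ and $R$. The counit of $D(H)$, being an algebra map, restricts on the subalgebra $\L\ot R$ (a subalgebra by Proposition \ref{prop:permutableD}(iii)) to $\ep\ot\ep$, which is therefore multiplicative. By Proposition \ref{prop:inner_auto} together with its $\G$-analog (obtained by the same duality argument), $\k(\G\times G)$ acts on $\L\ot R$ by inner conjugation stabilizing each tensor factor; being by algebra automorphisms, this action preserves the counits. Combining these facts shows that $\pi_A$ is a well-defined algebra map on $A$ with $\ker\pi_A = I_A$, which at once yields the ideal property and the quotient identification $A/I_A \simeq \k(\G\times G)$.

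For nilpotency, I exploit that $H$ is an $\mathbb{N}$-graded Hopf algebra via $H(n) = \bigoplus_{i+j=n} L(i)\ot\k G\ot R(j)$ (since both bosonizations $L\lboson\k G$ and $\k G\rboson R$ are graded Hopf subalgebras), so that $H^*$ inherits the dual grading and the subalgebras $\L, R \subset D(H)$ retain their intrinsic Hopf-algebra-object gradings. Equip $\L\ot R$ with the total-degree filtration $F_n = \sum_{i+j\le n}\L(i)\ot R(j)$. Applying \eqref{eq:formula1} with $h = r \in R(n)$ and $\a\in\L(m)$, the pairing $H^*\ot H\to\k$ vanishes off the diagonal of the gradings, so for non-vanishing summands one has $\deg\as = \deg r_{(1)}$ and $\deg\ai = \deg r_{(3)}$; this forces $\deg(\an r_{(2)}) = m+n - 2(\deg r_{(1)} + \deg r_{(3)}) \le m+n$, establishing $F_m\cdot F_n\subset F_{m+n}$. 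Thus $F_\bullet$ is a bounded exhaustive algebra filtration of the finite-dimensional $\L\ot R$, and its associated graded is a finite-dimensional $\mathbb{N}$-graded algebra with degree-zero component $\k$; the augmentation ideal of the latter is therefore nilpotent, and a standard lifting argument transports this to $\L\ot R$. Since $\k(\G\times G)$ acts on $\L\ot R$ by algebra automorphisms preserving this ideal, $I_A$ is nilpotent in $A$. The chief obstacle will be the detailed verification that the summands of $r\a$, after rewriting in the tensorial form $\L\ot R$, respect the claimed degree bound, which requires a careful analysis of the iterated coproducts in the $\mathbb{N}$-graded $H$ and $H^*$.
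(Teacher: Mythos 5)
Your overall strategy (realize $I_A$ as the kernel of $\ep\ot\ep\ot\mathrm{id}$ and get nilpotency from a degree filtration on $\L\ot R$) is reasonable in spirit --- the paper's own Remark \ref{rem:added_in_revision} eventually recasts the proposition in exactly such graded terms --- but both of your key justifications fail at the same point, namely where $\G$ meets $L$ and $R$. The asserted ``$\G$-analog'' of Proposition \ref{prop:inner_auto} is false when $\delta\ne 0$: an element $\hat g\in\G$ is \emph{not} grouplike in $H^*$ (its coproduct has components in $(\k\G\ot(\R)^+)\ot((\L)^+\ot\k\G)$ by \eqref{eq:coprod_Gdual}, precisely because $\ep\ot\mathrm{id}\ot\ep:H\to\k G$ is not an algebra map once \eqref{eq:wv} has a nonzero $\delta$-term), so the duality argument that works for $g\in G$ does not transfer. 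Concretely, for $w\in W_{g_1}\subset R$ one computes from \eqref{eq:formula2} that $\hat g\, w\, \hat g^{-1}=\hat g(g_1)\, w+\beta$ with $\beta$ a generally nonzero element of $(\L)^+\ot\k\G$; thus conjugation by $\hat g$ stabilizes neither $R$ nor the subalgebra $\L\ot R$, and the multiplicativity of your $\pi_A$ does not follow from the reasoning you give. This is exactly the point where the paper has to work: its proof establishes the two containments $(\R)^+L\subset L(\R)^+$ and $L^+(\k\G)\subset(\k\G)L^+$ (the latter only modulo $J_1$) by explicit use of \eqref{eq:ah}, \eqref{eq:ha} and \eqref{eq:coprod_Gdual}, rather than by any conjugation-stability statement.

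The nilpotency half has a second, related error: the total-degree decomposition $H(n)=\bigoplus_{i+j=n}L(i)\ot\k G\ot R(j)$ is not an algebra grading, since \eqref{eq:wv} puts $wv-vw$ in $\k G=H(0)$ while $vw$ sits in total degree $2$; consequently $\Delta_{H^*}$ is not graded for the dual total grading, and the ``diagonal pairing'' bookkeeping ($\deg\as=\deg r_{(1)}$, etc.) that you use to get $F_mF_n\subset F_{m+n}$ is not available. The grading that actually works is the $\mathbb{Z}$-grading of Remark \ref{rem:added_in_revision}, in which $L$ and $R$ carry degrees of opposite signs (so that \eqref{eq:wv} is homogeneous of degree $0$) and $\L\ot R$ becomes a genuinely non-negatively graded subalgebra of $D(H)$ with degree-zero part $\k$; with that in hand your filtration collapses to a grading and the nilpotency of $(\L\ot R)^+$ is immediate. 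Even then, to pass from nilpotency of $(\L\ot R)^+$ to nilpotency of $I_A$ inside $A$ you need $\gamma\,(\L\ot R)^+\subset(\L\ot R)^+\,\k(\G\times G)$ for $\gamma\in\G\times G$, which is weaker than, and must replace, your false stability claim; it is true, but it requires an argument of the type the paper gives (or the graded one of Remark \ref{rem:added_in_revision}), not the one you offer.
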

\begin{proof}
We concentrate on $I_B$, since we can do with $I_A$ similarly. It suffices to prove that
$I_B$ is an nilpotent ideal, since it immediately implies $B/I_B \simeq \k(\G \times G)$. 

Let
\[
J_1:= \k(\G \times G) \ot L \ot (\R)^+,\quad J_2:=\k(\G \times G) \ot L^+ \ot \R. 
\]
The proof is divided into two steps. 
The first step is to prove that $J_1$ is an ideal of $B$. 
We claim that the inner automorphisms on $\R$ 
by $G$ and by $\G$ are both compatible with the counit, whence they stabilize $(\R)^+$.
For the inner automorphisms by $G$ this is seen from \eqref{eq:G-actions}.
For those by $\hat{G}$ this follows since the stability is obvious in the bosonization
$\k \hat{G} \rboson \R$, and this result proves the claim by Proposition \ref{prop:Hdual_suppl} (2). 
By the claim just proven
it remains to prove $(\R)^+L\subset L(\R)^+$. This follows if one applies $\mathrm{id} \ot \ep$ to
the element $\l \an, \Sm(\hi) \r \, \hn  \ai \in L \ot \R$ on the right hand-side of \eqref{eq:ah},
and then obtains $\ep(\a)h$, which is zero if $\a \in (\R)^+$.
We remark: (1)~$J_1$, which has just been proved to be an ideal, is nilpotent since $(\R)^+$ is.

The quotient algebra $B/J_1$ has the tensorial presentation $B/J_1= \k(\G \times G) \ot L$. 
The second step of the proof is
to prove the natural image $\k(\G \times G) \ot L^+$ of $J_2$ (or of $I_B$) is an ideal of $B/J_1$.
Since $L^+$ is stable under the inner automorphisms by $G$, it remains to prove that
$L^+(\k \G) \subset (\k \G) L^+$ in $B/J_1$. Let $h \in L$ and $\a \in \G$.
We claim
\begin{equation*}\label{eq:at_revision}
h \a = \a h_{(2)}\, \l \a, h_{(1)} \r \ \, \text{in}\ \, B/J_1. 
\end{equation*}
If $h \in L^+$, the right-hand side is killed by 
$\mathrm{id}\otimes \ep : \k(G\times \G) \ot L \to \k(G \times \G)$,
and so the desired $h \a \in (\k \G)L^+$ will follow. 
Let $\pi = \mathrm{id} \ot \ep : \k \G \otimes \R \to \k \G$ denotes the natural projection.
Since Eq.~\eqref{eq:ha}
holds (see the proof of Part vi of Proposition \ref{prop:permutableD}), it follows that
the product
$h \a$ in $B/J_1$ coincides with $\pi(\ai)\hn\, \l \an,\hi \r \in \k \G \otimes L$.
We see from \eqref{eq:coprod_Gdual} that $\pi(\ai) \ot \an$
coincides with the coproduct $\Delta(\pi(\a))$ of $\pi(\a)$ on the quotient coalgebra
$H^*/H^*(\R)^+=L^* \ot \k \G$. Since this last is indeed a Hopf algebra which includes $\k \G$
as a Hopf subalgebra, the coproduct $\Delta(\pi(\a))$ equals $\a \ot \a$; this proves the claim. 
We remark: (2)~$I_B/J_1\, (= \k(\G \times G)\ot L^+)$, which has just been proved to be an ideal of $B/J_1$,
is nilpotent since $L^+$ is. 

We conclude that $I_B$ is an ideal of $B$; it is nilpotent by the remarks (1), (2).
\end{proof}

See Remark \ref{rem:added_in_revision} below for a simplified argument proving the proposition.

\begin{rem}
To confirm that $D(H)$ is in the same situation as the $\Lambda$ in Section \ref{subsec:pattern},
we see from Proposition \ref{prop:ideal} that 
the subalgebra $\L \ot R$ of $A=\L \ot R \ot \k(\G \times G)$
is augmented with respect to the augmentation ideal
\[
(\L \ot R)^+ := (\L)^+ \ot R + \L \ot R^+\, (=I_A \cap (\L \ot R)), 
\]
which is nilpotent, and satisfies 
$\k (\hat{G}\times G)\, (\L \ot R)^+=(\L \ot R)^+\, \k (\hat{G}\times G) \, (=I_A)$. 
Similarly, the subalgebra $L \ot \R$ of 
$B=k(\G \times G)\ot L \ot \R$ is augmented with respect to the augmentation ideal
\[
(L \ot \R)^+ := L^+ \ot \R + L \ot (\R)^+\, (=I_B \cap (L \ot \R)), 
\]
which is nilpotent, and satisfies 
$(L \ot \R)^+\, \k (\hat{G}\times G)=\k (\hat{G}\times G)\, (L \ot \R)^+\, (=I_B)$.
\end{rem}

Recall that in the present situation, the group playing the role of the $G$ 
in Section \ref{subsec:pattern} is $\hat{G}\times G$, whose dual group
$\widehat{\G \times G}$ is $G \times \G$. 
Theorem \ref{thm:pattern} now reads:

\begin{theorem}\label{thm:D(H)simple}
For every $\lambda \in G \times \G$, 
let $\k_B(\lambda)$ denote the one-dimensional left $B$-module given by the algebra map 
\[
B \overset{\mathrm{proj}}{\longrightarrow} B/I_B =\k(\G \times G)\overset{\k \lambda}{\longrightarrow} \k.
\] 
\begin{itemize}
\item[(1)] The left $D(H)$-module
\[
\mathscr{M}(\lambda)=D(H)\ot_B \k_B(\lambda)
\]
has a unique simple quotient $D(H)$-module, which we denote by $\mathscr{L}(\lambda)$.
\item[(2)]
$\lambda \mapsto \sL(\lambda)$ gives a bijection $G \times \G \isomto D(H)\text{-}\mathsf{Simple}$.   
\end{itemize}
\end{theorem}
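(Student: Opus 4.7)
The plan is to deduce this theorem as an immediate application of Theorem \ref{thm:pattern}, since essentially all of the preparation has already been carried out. What remains is simply to check that $D(H)$, together with the tensorial presentation supplied by Lemma \ref{lem:tensor_pres_D}, fits the framework of Section \ref{subsec:pattern}, and then to invoke Theorem \ref{thm:pattern} directly.

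Concretely, I would match up the data as follows: the finite-dimensional algebra is $\Lambda = D(H)$; the group $G$ of Section \ref{subsec:pattern} is replaced by $\hat{G}\times G$, whose dual group is $G\times\hat{G}$; the role of the subalgebra ``$L$'' in Section \ref{subsec:pattern} is played by $\L \otimes R$, and the role of ``$R$'' by $L\otimes \R$. Lemma \ref{lem:tensor_pres_D} then furnishes the required tensorial presentation $D(H)=(\L\otimes R)\otimes \k(\hat{G}\times G)\otimes (L\otimes \R)$, and asserts that both $\L\otimes R$ and $L\otimes \R$ are permutable with $\k(\hat{G}\times G)$.

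Next I would check Conditions (I) and (II) of Section \ref{subsec:pattern}. For (I), the $\mathbb{N}$-gradings of $\L$ and $\R$ with $\L(0)=\k=\R(0)$ from Lemma \ref{lem:dual_Hopf_object} equip $\L\otimes R$ and $L\otimes \R$ with nilpotent augmentation ideals
\[
(\L\otimes R)^+=(\L)^+\otimes R + \L\otimes R^+, \qquad (L\otimes \R)^+=L^+\otimes \R + L\otimes (\R)^+,
\]
as recorded in the remark after Proposition \ref{prop:ideal}. For (II), one reads off from that same remark (which rests on Proposition \ref{prop:ideal}) that $\k(\hat{G}\times G)(\L\otimes R)^+=(\L\otimes R)^+\k(\hat{G}\times G)$ and similarly on the other side. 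Therefore the subalgebras $A$ and $B$ defined by \eqref{eq:AB} have Jacobson radicals $I_A$ and $I_B$ with $A/I_A=\k(\hat{G}\times G)=B/I_B$, exactly as required.

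With all hypotheses verified, Theorem \ref{thm:pattern} applies verbatim and simultaneously yields (1) the existence of a unique simple quotient $\sL(\lambda)$ of $\sM(\lambda)=D(H)\otimes_B \k_B(\lambda)$, and (2) the bijection $G\times \hat{G}\isomto D(H)\text{-}\mathsf{Simple}$. The main obstacle has really been absorbed into the earlier work, namely the delicate commutation analysis in Propositions \ref{prop:permutableD} and \ref{prop:ideal} that makes (II) hold despite the nontrivial way in which $H^*$ and $H$ braid inside $D(H)$; once those are in place, the theorem is essentially automatic.
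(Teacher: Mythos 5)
Your proposal is correct and follows exactly the route the paper takes: the paper itself offers no separate proof of this theorem, but simply records (via Lemma \ref{lem:tensor_pres_D}, Proposition \ref{prop:ideal} and the remark following it) that $D(H)$ with the presentation $(\L\otimes R)\otimes\k(\G\times G)\otimes(L\otimes\R)$ satisfies Conditions (I) and (II) of Section \ref{subsec:pattern}, and then states that Theorem \ref{thm:pattern} ``now reads'' as the result. Your identification of the roles of the subalgebras and your appeal to the earlier permutability and nilpotency results match the paper's argument precisely.
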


As was pointed out by the referee, Part 2 can be formulated as a bijection 
$D(\k G)\text{-}\mathsf{Simple} \isomto D(H)\text{-}\mathsf{Simple}$. This is the same
phenomenon as the one that was discovered by \cite{PV} and others, when $H$ is the bosonization of
a finite-dimensional Nichols algebra over an abelian or non-abelian finite group.

\begin{rem}[added in revision]\label{rem:added_in_revision}
After submitting an earlier version of the present paper the authors found the article \cite{BT} by
Bellamy and Thiel;
they develops highest weight theory of $\mathbb{Z}$-graded modules over a finite-dimensional $\mathbb{Z}$-graded
algebra which has what they call \emph{triangular decomposition} \cite[Definition 3.1]{BT}. 
This last notion  
was essentially introduced by Holmes and Nakano \cite{HN}, and generalizes 
ours (see Section \ref{subsec:pattern})
in the sense that it is specialized so as to well apply to our objective, as follows. 

Let $\Lambda$ be a finite-dimensional $\mathbb{Z}$-graded algebra including a group algebra $\k G$
as its neutral component $\Lambda(0)$. Suppose in addition: (1)~$\Lambda$ includes a non-positively
graded subalgebra $L=\bigoplus_{n\le 0}L(n)$ and a non-negatively graded subalgebra $R=\bigoplus_{n\ge 0}R(n)$,
each of which is permutable (see Section \ref{subsec:tensor_product}) with $\k G$, and 
(2)~we have $\Lambda=L\ot \k G \ot R$ as in \eqref{eq:TRD}. 
We then necessarily have: (i)~$L(0)=R(0)=\k$, and
(ii)~$L^+=\bigoplus_{n<0}L(n)$ and $R^+=\bigoplus_{n>0}R(n)$ are nilpotent (augmentation) ideals.
Therefore, Conditions (I) and (II) in Section \ref{subsec:pattern}
are necessarily satisfied. As for (II), the first equality $(\k G)L^+=L^+(\k G)$, for example,
holds since the both sides are the sum of all positive components in $\k G \ot L=L \ot \k G$.

A close look at our arguments above, combined with some additional remarks below, will show: 
\emph{The finite-dimensional Hopf algebra $H$ given
by Proposition \ref{prop:def_of_H} and its Drinfel'd double $D(H)$ discussed in the present section 
fit in with the situation above.} For this we regard $L$ as non-positively graded,
changing the signs of degrees.
In addition we regard $L^*$ and $R^*$ as non-negatively and non-positively graded,
respectively, so that the relevant pairings preserves the grading, with $\k$ trivially graded, $\k=\k(0)$. 

Indeed, as is seen from \eqref{eq:wv}, $H$ is a $\mathbb{Z}$-graded Hopf algebra, and so $H^*$ is as well. 
Now, the desired result for $H$ is easy to see. One sees from \eqref{eq:formula1} that
$D(H)$ is a $\mathbb{Z}$-graded Hopf algebra. 
To see the desired result for $D(H)$, it suffices to add to the paragraph containing
\eqref{eq:AB} the following immediate consequence of Proposition \ref{prop:permutableD} (iii), (iv):
$L^*R\, (=L^*\ot R)$ (resp., $LR^*\, (=L\ot R^*)$) is a non-negatively (resp., non-positively)
graded subalgebra of $D(H)$. (To make the situation completely fit in with the above one we have to 
change the signs of degrees. 
But this is inessential.)
With this having seen, Proposition \ref{prop:ideal} turns to be obvious. 

Bellamy and Thiel \cite{BT} list seven classes of finite-dimensional $\mathbb{Z}$-graded algebras
with triangular decomposition, to which their own highest weight theory can apply. It includes 
the class of Lusztig's quantum Frobenius Kernels. Our contribution is to have 
widened the last class to those $H$ as above, and to have added $D(H)$ to their list. 
\end{rem}

\section*{Acknowledgments}
The work was supported by JSPS Grant-in-Aid for Scientific Research (C), 26400035 and 17K05189. 
The authors thank the referee for his/her helpful comments.

\end{document}